\theoremstyle{plain}
\newtheorem{thm}{Theorem}[section]
\newtheorem{corollary}[thm]{Corollary}
\newtheorem{lemma}[thm]{Lemma}
\newtheorem{prop}[thm]{Proposition}
\newtheorem*{theorem*}{Theorem}
\theoremstyle{definition}
\newtheorem{rem}[thm]{Remark}
\newtheorem{dfn}[thm]{Definition}
\numberwithin{equation}{section}
\newcommand{\mytilde}{\raise.17ex\hbox{$\scriptstyle\mathtt{\sim}$}}
\newcommand{\I}{\ensuremath{\mathcal{I}}}
\newcommand{\bfI}{\ensuremath{\boldsymbol{\mathcal{I}}}}
\renewcommand{\S}{\ensuremath{\mathcal{S}}}
\renewcommand{\P}{\ensuremath{\mathcal{P}}}
\newcommand{\R}{\ensuremath{\mathcal{R}}}
\newcommand{\N}{\ensuremath{\mathcal{N}}}
\newcommand{\G}{\ensuremath{\mathcal{G}}}
\newcommand{\U}{\ensuremath{\mathcal{U}}}
\newcommand{\E}{\ensuremath{\mathcal{E}}}
\newcommand{\RgeO}{\ensuremath{\mathbb{R}_{ \geq 0}}}
\newcommand{\Rat}[1]{\ensuremath{\mathbb{R}^{#1}}}
\newcommand{\mbf}[1]{\ensuremath{\boldsymbol{#1}}} 
\renewcommand{\bf}[1]{\textbf{#1}}
\begin{document}

\author{D. Boskos}
\address{Department of Automatic Control, School of Electrical Engineering, KTH Royal Institute of Technology, Osquldas v\"ag 10, 10044, Stockholm, Sweden}
\email{boskos@kth.se}

\author{D. V. Dimarogonas}
\address{Department of Automatic Control, School of Electrical Engineering, KTH Royal Institute of Technology, Osquldas v\"ag 10, 10044, Stockholm, Sweden}
\email{dimos@kth.se}

\begin{abstract}
In this report, we aim at the development of an online abstraction framework for multi-agent systems under coupled constraints. The motion capabilities of each agent are abstracted through a finite state transition system in order to capture reachability properties of the coupled multi-agent system over a finite time horizon in a decentralized manner. In the first part of this work, we define online abstractions by discretizing an overapproximation of the agents' reachable sets over the horizon. Then, sufficient conditions relating the discretization and the agent's dynamics properties are provided, in order to quantify the transition possibilities of each agent.  
\end{abstract}

\keywords{hybrid systems, multi-agent systems, transition systems.}

\title{Online Abstractions for Interconnected Multi-Agent Control Systems}

\maketitle

\section{Introduction}

During the last decade there has been an emerging focus on the problem of high level planning for  multi-agent systems by leveraging methods from formal verification \cite{LsKk04}. In order to  
exploit these tools for dynamic agents, it is required to build a discretized model of the continuous system which allows for the algorithmic synthesis of high level plans. Specifically, the use of an appropriate abstract representation enables the conversion of discrete paths into sequences of feedback controllers which enable the continuous time model to implement the high level specifications.

The aforementioned control synthesis problem has lead to a significant  research effort for the derivation of discrete state analogues of  continuous control systems, also called abstractions, which can capture reachability properties of the original model. Abstractions for piecewise affine systems on simplices and rectangles were introduced in  \cite{HlVj01} and have been further studied in \cite{BmGm14}. Closer related to the control framework that we adopt here for the derivation of the discrete models is the paper \cite{HmCp14b} which builds on the notion of In-Block Controllability \cite{CpWy95}. Abstractions for nonlinear systems include \cite{Rg11}, which is focused on general discrete time systems and \cite{AaTaSa09}, where box abstractions are derived for polynomial and other classes of systems. Furthermore, abstractions for interconnected systems have been recently developed in \cite{TyIj08}, \cite{PgPpDm14},  \cite{PgPpDm16}, \cite{RmZm15}, \cite{Mp15a}, \cite{DeTp15} and are primarily based on small gain criteria. 

In this work we consider multi-agent systems and provide an online abstraction methodology which enables the exploitation of the system's dynamic properties over bounded reachable sets. Specifically, we focus on agents whose dynamics consist of decentralized feedback interconnection terms and additional bounded input terms which allow for the synthesis of high level plans under the coupled constraints. The analysis builds on parts of the framework introduced in our recent work \cite{BdDd15a}, which focused on the discretization of the whole workspace and required the assumption of global bounds for the dynamics of the agents. In this framework, the latter hypothesis is considerably weakened, since it is only required that the system is forward complete. In addition, it is also possible to obtain coarser discretizations, since (i) the transition system of each agent is updated at the end of the time interval and thus, heterogeneous discretizations are considered for different agents, and  (ii) the dynamics bounds of each agent, which constitute a measure of ``coarseness" for its discretization, are evaluated for overapproximations of the agent and its neighbors' reachable sets and can result in reduced size discrete models for agents with weaker couplings over the time horizon. A relevant abstraction approach can be also found in \cite{SeAa13} where local Lipschitz properties of probability densities for stochastic kernels are exploited for the efficient abstraction of probabilistic systems into finite Markov Chains.   
  
The rest of the paper  is organized as follows. Basic notation and preliminaries are introduced in Section 2. In Section 3, we formulate well posed online abstractions for single integrator multi-agent systems, based on the existence of appropriate hybrid feedback laws and prove that the latter provide solutions consistent with the design requirement on the systems' free inputs over the specified time horizon. Section 4 is devoted to the design of the specific hybrid controllers that are exploited for the derivation of the transitions. Space-time discretizations that guarantee well posed abstractions and their reachability properties are quantified in Section 5 and we conclude in Section 6. 

\section{Preliminaries and Notation}

We use the notation $|x|$ for the Euclidean norm of a vector $x\in\Rat{n}$. For a subset $S$ of $\Rat{n}$, we denote by  ${\rm int}(S)$ its interior and define the distance from a point $x\in\Rat{n}$ to $S$ as $d(x,S):=\inf\{|x-y|:y\in S\}$. Given $R>0$ and $y\in\Rat{n}$, we denote by $B(R)$ the closed ball with center $0\in\Rat{n}$ and radius $R$, namely $B(R):=\{x\in\Rat{n}:|x|\le R \}$ and $B(x;R):=\{y\in\Rat{n}:|x-y|\le R \}$. Given two sets $A,B\subset\Rat{n}$ their Minkowski sum is defined as $A+B:=\{x+y\in\Rat{n}:x\in A, y\in B\}$. We say that a continuous function $a:\RgeO\to\RgeO$ belongs to class $\mathcal{K}_+$ if it is positive and strictly increasing and that $\beta:\RgeO\times\RgeO \to\RgeO$ is of class $\mathcal{K}_+\mathcal{K}_+$, if $\beta(t,\cdot)$ and $\beta(\cdot,s)$ are of class $\mathcal{K}_+$ for all $t,s\ge 0$.  

Consider a multi-agent system with $N$ agents. For each agent $i\in\N:=\{1,\ldots,N\}$ we use the notation $\mathcal{N}_i\subset\N\setminus\{i\}$ for the set of its neighbors and $N_i$ for its cardinality. We also consider an ordering of the agent's neighbors which is denoted by $j_1,\ldots,j_{N_i}$ and define the $N_i$-tuple $j(i)=(j_1(i),\ldots,j_{N_i}(i))$. Whenever it is clear from the context, the argument $i$ will be omitted from the latter notation. The agents' network is represented by a directed graph $\G:=(\N,\E)$, with vertex set $\N$ the agents' index set and edge set $\E$ the ordered pairs $(\ell,i)$ with $i,\ell\in\N$ and $\ell\in\N_i$. The sequence $i_0i_1\cdots i_m$ with $(i_{\kappa-1},i_{\kappa})\in\E$, $\kappa=1,\ldots,m$, namely, consisting of $m$ consecutive edges in $\G$, forms \textit{a path} (of length $m$) in $\G$. A path $i_0i_1\cdots i_m$ with $i_0=i_m$ is called a \textit{cycle}. Given the nonempty indexed sets $\I_1,\ldots,\I_N$, their Cartesian product $\bfI:=\I_1\times\cdots\times\I_N$ and an agent $i\in\N$ with neighbors $j_1,\ldots,j_{N_i}\in\N$, we define the mapping ${\rm pr}_i:\bfI\to\bfI_i:=\I_i\times\I_{j_1}\times\cdots\times\I_{j_{N_i}}$ which assigns to each $N$-tuple $(l_1,\ldots,l_N)\in\bfI$ the $N_i+1$-tuple $(l_i,l_{j_1},\ldots,l_{j_{N_i}})\in\bfI_i$, i.e., the indices of agent $i$ and its neighbors.

We proceed by providing the definition of a transition system.

\begin{dfn}
A transition system is a tuple $TS:=(Q,Q_0,Act,\longrightarrow)$, where:

\textbullet\; $Q$ is a set of states.

\textbullet\; $Q_0\subset Q$ is a set of initial states.

\textbullet\; $Act$ is a set of actions.

\textbullet\; $\longrightarrow$ is a transition relation with $\longrightarrow\subset Q\times Act\times Q$.

\noindent The transition system is said to be finite, if $Q$ and $Act$ are finite sets. We also denote an element $(q,a,q')\in\longrightarrow$ as  $q\overset{a}{\longrightarrow} q'$ and define ${\rm Post}(q;a):=\{q'\in Q:(q,a,q')\in\longrightarrow\}$, for every $q\in Q$ and $a\in Act$.
\end{dfn}

\section{Abstraction of the Agents Reach Sets}

We focus on multi-agent systems with single integrator dynamics
\begin{equation}\label{single:integrator}
\dot{x}_{i}=f_{i}(x_{i},\bf{x}_j)+v_{i}, i\in\N,
\end{equation}

\noindent with $\bf{x}_j(=\bf{x}_{j(i)}):=(x_{j_{1}},\ldots,x_{j_{N_i}})\in\Rat{N_i n}$. We assume that the agents are in general heterogeneous and consider  decentralized control laws consisting of two terms, a feedback term $f_{i}(\cdot)$ which depends on the states of $i$ and its neighbors, and a free input $v_{i}$. We assume that for each $i\in\N$ it holds $x_{i}\in \Rat{n}$ and that each $f_{i}(\cdot)$ is locally Lipschitz. We also assume that $v_{i}\in\mathcal{U}_{i}$, $i\in\N$ where $\mathcal{U}_{i}$ is a bounded subset of $L^{\infty}(\RgeO;\Rat{n})$ taking values in a compact set $U_i\subset\Rat{n}$ for each $i$ and define $\mathcal{U}:=\mathcal{U}_{1}\times\cdots\times\mathcal{U}_{N}$. The online abstraction framework is based on the discretization of each agent's reachable set over a given time horizon and the selection of a time step $\delta t$ which corresponds to the duration of the discrete transitions. We will consider specific types of space discretizations, called cell decompositions (see also \cite{Gl02}).

\begin{dfn} \label{cell:decomposition}
\textit{(i)} Let $D$ be a bounded domain of $\Rat{n}$. A cell decomposition $\mathcal{S}=\{S_{l}\}_{l\in\mathcal{I}}$ of $D$, is a finite family of bounded sets $S_{l}$, $l\in\mathcal{I}$ with nonempty interior, such that ${\rm int}(S_{l})\cap {\rm int}(S_{\hat{l}})=\emptyset$ for all $l\ne\hat{l}$ and $\cup_{l\in\mathcal{I}} S_{l}=D$. 

\noindent \textit{(ii)} Given a bounded domain $D$ of $\Rat{n}$, a cell decomposition $\S$ of $D$ and a set $A\subset D$, we say that $\S$ is compliant with $A$, if for any $S\in\S$ with $S\cap A\ne \emptyset$ it holds that $S\subset A$.
\end{dfn}

In order to provide decentralized abstractions we follow parts of the approach employed in \cite{BdDd15a} and design appropriate hybrid feedback laws in place of the $v_{i}$'s in order to guarantee well posed transitions. Before proceeding to the necessary definitions related to the problem formulation, we provide certain assumptions on the dynamics of the multi-agent system.
We assume that system \eqref{single:integrator} is \textit{forward complete}, i.e., that for every initial condition $x_0\in\Rat{Nn}$ and input $v\in\mathcal{U}$ the solution $x(t,x_0;v)$ is defined for all $t\ge 0$. Hence, there exists a function $\beta\in\mathcal{K}_+\mathcal{K}_+$ (see \cite{Ki05}) such that 
\begin{equation} \label{beta:bound}
|x(t,x_0;v)|\le\beta (t,|x_0|),\forall t\ge 0, x_0\in\Rat{Nn},v\in\mathcal{U}.
\end{equation}

\noindent Additionally, we assume that each free input $v_{i}$, $i\in\N$ is bounded by a positive constant $v_{\max}(i)$ and in particular, that
\begin{equation}\label{input:bound}
\U_i=\{v_i\in L^{\infty}(\RgeO;\Rat{n}):|v_{i}(t)|\le v_{\max}(i),\forall t\ge 0\}.
\end{equation}

\begin{rem}
The same analysis can be applied in the case where $\mathcal{U}_i$ is the set of all measurable functions $v_i:\RgeO\to U_i$ with $U_i$ a subset of $\Rat{n}$ with nonempty interior. In this case, we can select $v_{i0}\in U_i$ and $R_i>0$ such that $B(v_{i0};R_i)\subset U_i$ and perform the analysis with the feedback terms $\bar{f}_i(x_i,\bf{x}_j):=f_i(x_i,\bf{x}_j)+v_{i0}$ instead, and $v_{\max}(i):=R_i$.
\end{rem}

\noindent We next also provide certain basic properties of the deterministic control system \eqref{single:integrator} which can be found in \cite[Chapter 1]{KiJj11}, or \cite[Chapter 2]{Se98} and will be invoked later in the proofs. For each $r>0$ we define the shift operator ${\rm Sh}_r:\mathcal{U}\to\mathcal{U}$ as $
{\rm Sh}_r(v)(t):=v(t+r),\forall t\ge 0$. To the control system \eqref{single:integrator} we associate the transition map $\varphi:A_{\varphi}\to \Rat{Nn}$ with $A_{\varphi}:=\{(t,t_0,x_0;v):t\ge t_0\ge 0,x_0\in \Rat{Nn},v\in\mathcal{U}\}$, where $\varphi(t,t_0,x_0;v)$ denotes the value at time $t$ of the unique solution of \eqref{single:integrator} with initial condition $x_0$ at time $t_0$ and input $v(\cdot)$. Notice, that by virtue of the forward completeness assumption, $\varphi(\cdot)$ is well defined. The map $\varphi(\cdot)$ satisfies the following properties:

\noindent \textbullet\;\textit{(Strict) Causality.}\; For each $t> t_0\ge 0$, $x_0\in \Rat{Nn}$ and $v^1,v^2\in\mathcal{U}$ with $v^1|_{[t_0,t)}=v^2|_{[t_0,t)}$ it holds $\varphi(t,t_0,x_0;v^1)=\varphi(t,t_0,x_0;v^2)$, where $v^1|_{[t_0,t)}$ denotes the restriction of $v^1(\cdot)$ to $[t_0,t)$.

\noindent \textbullet\;\textit{Semigroup Property.}\; For each $t_2\ge t_1\ge t_0\ge 0$, $x_0\in \Rat{Nn}$ and $v\in\mathcal{U}$ it holds $
\varphi(t_2,t_1,\varphi(t_1,t_0,$ $x_0;v);v)=\varphi(t_2,t_0,x_0;v)$.

\noindent \textbullet\;\textit{Time Invariance.}\; For each $r>0$, $t\ge t_0\ge r$, $x_0\in \Rat{Nn}$ and $v\in\mathcal{U}$ it holds $\varphi(t,t_0,x_0;v)=\varphi(t-r,t_0-r,x_0;{\rm Sh}_r(v))$.

In order to employ the online abstraction framework, we will consider a fixed time horizon $[0,T]$, $T>0$ on which we aim to abstract the agents' dynamics through a finite state transition system.  Thus, at time $t=0$, given the agents' initial positions, we will discretize an overapproximation of their reachable set over $[0,T]$ and select a time step $\delta t$ which exactly divides  $T$, i.e., such that $T=\ell\delta t$ for certain $\ell\in\mathbb{N}$, in order to capture the motion of the system over that time interval through a finite transition system. After employing a discrete plan over $[0,T]$, we repeat the same procedure for the positions of the agents at $t=T$ and the new horizon $[T,2T]$, and proceed analogously with the horizons $[\kappa T,(\kappa+1) T]$, $\kappa\ge 2$. For the subsequent analysis, we will assume fixed the initial states $X_{10},\ldots,X_{N0}$ of all agents at the beginning of the horizon $[0,T]$ and consider for each agent $i\in\N$ an \textit{open overapproximation} $\R_i(t)$ of its reachable set at $t\ge 0$. We also define the union of the reachable sets $\R_i(t)$ over a time interval $[a,b]\subset[0,\infty)$  as $\R_i([a,b]):=\cup_{t\in[a,b]}\R_i(t)$ and their inflation by a certain constant $c>0$ as $\R_i^c(t):=\R_i(t)+B(c)$, $\R_i^c([a,b]):= \cup_{t\in[a,b]}\R_i^c(t)$, implying that
\begin{align} 
\R_i^{c+\bar{c}}([a,b]) =\R_i^c([a,b])+B(\bar{c}),\forall c,\bar{c}>0. \label{Rti:over:interval:inflaed:semigroup}
\end{align}

\noindent By the forward completeness assumption we may always assume that the reachable sets $\R_i([a,b])$ are bounded, since from \eqref{beta:bound}, it follows that ${\rm int}(B(\beta(b+1,|\bf{X}_0|)))$, $\bf{X}_0=(X_{10},\ldots,X_{N0})$ is always an open overapproximation for the reachable set of each $i$ over $[a,b]$.  We can thus obtain by continuity of the feedback terms $f_{i}(\cdot)$ the following bounds for each agent $i\in\N$ over the overapproximations of the reachable sets. In particular we pick constants $M(i)>0$ such that
\begin{equation} \label{dynamics:bound}
|f_{i}(x_{i},\bf{x}_j)|\le M(i), \forall x_{i}\in \R_i([0,T]),x_{\kappa}\in \R_{\kappa}([0,T]),\kappa\in\N_i.
\end{equation}

\noindent Apart from the time horizon $[0,T]$ we will consider for certain technical reasons an additional time duration $0<\tau<T$ which corresponds to an upper bound on the time discretization step $\delta t$. Based on this time duration and the previously derived bounds we consider for each $i\in\N$ the sets
$\R_i^{c_i(\tau)}([0,T-\tau])$, where 
\begin{equation} \label{constant:ci}
c_i(\sigma):=(M(i)+v_{\max}(i))\sigma,\sigma>0,
\end{equation}

\noindent with $M(i)$ and $v_{\max}(i)$ as given in \eqref{dynamics:bound} and \eqref{input:bound}, respectively. We also assume that without any loss of generality it holds
\begin{equation} \label{reachset:includedin:sigma:inflated}
\R_i^{c_i(\sigma)}([0,T-\tau])\supset \R_i(T-\tau+\sigma),\forall \sigma\in(0,\tau]
\end{equation}

\noindent (otherwise we can replace $\R_i(T-\tau+\sigma)$ by $\R_i^{c_i(\sigma)}([0,T-\tau])\cap\R_i(T-\tau+\sigma)$, which by \eqref{input:bound}, \eqref{dynamics:bound} and \eqref{constant:ci} is a again an open overapproximation satisfying \eqref{dynamics:bound}). Hence, it follows from 
\eqref{reachset:includedin:sigma:inflated} that 
\begin{equation} \label{reachset:includedin:sigma:interval:inflated}
\R_i^{c_i(\sigma)}([0,T-\tau])\supset \R_i([0,T-\tau+\sigma]),\forall \sigma\in(0,\tau].
\end{equation}

\noindent Given a time step $0<\delta t<\tau$ we  depict the overapproximations of the reachable sets $\R_i([0,T-\tau])\subset \R_i([0,T-\delta t])\subset\R_i([0,T])$ of agent $i$ with the red areas in Fig. \ref{fig:reach}. They all contain the exact reachable set $\R_i^{\rm exact}([0,T-\tau])$ of $i$ over $[0,T-\tau]$ and the initial condition $X_{i0}$ of $i$. We also depict the inflation  $\R_i^{c_i(\tau)}([0,T-\tau])$ of $\R_i([0,T-\tau])$ which contains the overapproximation $\R_i([0,T])$ as required by \eqref{reachset:includedin:sigma:interval:inflated}. The same property is also illustrated for the set $\R_i^{c_i(\tau-\delta t)}([0,T-\tau])$ enclosed in the dashed curve, which satisfies \eqref{reachset:includedin:sigma:interval:inflated} with $\sigma=\tau-\delta t$.

\begin{figure}[H]
\begin{center}

\resizebox{0.8\textwidth}{!}{
\begin{tikzpicture}[scale=.8]


\filldraw[line width=.02cm, fill=cyan!30!white] plot [domain=90:270, variable=\theta,samples=50]({1.7*cos(\theta)},{1.7*sin(\theta)})
-- (5,-3.5)  plot [domain=-90:90, variable=\theta,samples=50]({5+3.5*cos(\theta)},{3.5*sin(\theta)}) -- (0,1.7); 

\filldraw[line width=.02cm, fill=red!40!white, opacity=.7] plot [domain=90:270, variable=\theta,samples=50]({.2*cos(\theta)},{.2*sin(\theta)})
-- (5,-2)  plot [domain=-90:90, variable=\theta,samples=50]({5+2*cos(\theta)},{2*sin(\theta)}) -- (0,.2); 

\filldraw[line width=.02cm, fill=red!20!white]plot  [domain=90:-90, variable=\theta,samples=50]({5+2*cos(\theta)},{2*sin(\theta)})
-- (5,-2)  plot [domain=-90:90, variable=\theta,samples=50]({5+3.4*cos(\theta)},{2*sin(\theta)}); 

\filldraw[line width=.04cm, fill=red!30!white]plot  [domain=90:-90, variable=\theta,samples=50]({5+2*cos(\theta)},{2*sin(\theta)})
-- (5,-2)  plot [domain=-90:90, variable=\theta,samples=50]({5+2.6*cos(\theta)},{2*sin(\theta)}); 

\draw[line width=.02cm, dashed, cyan] plot [domain=90:270, variable=\theta,samples=50]({.8*cos(\theta)},{.8*sin(\theta)})
-- (5,-2.7)  plot [domain=-90:90, variable=\theta,samples=50]({5+2.7*cos(\theta)},{2.7*sin(\theta)}) -- (0,.8);

\filldraw[line width=.02cm, fill=green!45!white, opacity=.7]  (0,0) to[out=10,in=180] (4.5,1.2) to[out=0,in=90] (6,1.5) to[out=-90,in=90] (5.5,-1) to[out=-90,in=0] (3,-.5) to[out=180,in=0] (0,0); 

\filldraw[]  (0,0) circle (0.05cm);
\node (init) at (0,0) [label=left:$X_{i0}$] {};

\node (init) at (3,0) [label=center:$\R_i^{\rm exact}({[0,T-\tau]})$] {};

\filldraw[]  (6.5,1) circle (0.05cm);
\draw[line width=.03cm,->,>=stealth] (6.5,1)  -- (9.5,1);

\node (init) at (9.5,1) [label=right:$\R_i({[0,T-\tau]})$] {};

\filldraw[]  (7.2,.2) circle (0.05cm);
\draw[line width=.03cm,->,>=stealth] (7.2,.2)  -- (9.5,.2);

\node (init) at (9.5,.2) [label=right:$\R_i({[0,T-\delta t]})$] {};

\filldraw[]  (8,-.6) circle (0.05cm);
\draw[line width=.03cm,->,>=stealth] (8,-.6)  -- (9.5,-.6);

\node (init) at (9.5,-.6) [label=right:$\R_i({[0,T]})$] {};

\filldraw[]  (6.8,-2) circle (0.05cm);
\draw[line width=.03cm,->,>=stealth] (6.8,-2)  -- (9,-2);

\node (init) at (9,-2) [label=right:$\R_i^{c_i(\tau-\delta t)}({[0,T-\tau]})$] {};

\filldraw[]  (6.8,-2.8) circle (0.05cm);
\draw[line width=.03cm,->,>=stealth] (6.8,-2.8)  -- (9,-2.8);

\node (init) at (9,-2.8) [label=right:$\R_i^{c_i(\tau)}({[0,T-\tau]})$] {};

\draw[line width=.03cm,<->,>=stealth] (5,2)  -- (5,3.5);

\node (init) at (5,3) [label=right:$c_i(\tau)$] {};

\end{tikzpicture}
} 
\end{center}
\caption{Illustration of agent's $i$ reachable sets over the horizon $[0,T]$.}  \label{fig:reach}
\end{figure}
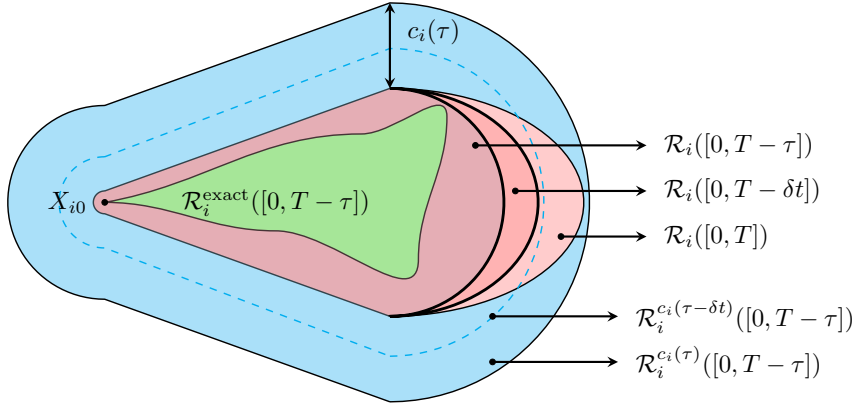

\noindent  Let $\{S_l^i\}_{l\in\bfI}$ be a cell decomposition of  $\R_i([0,T])$. Then, we define the \textit{product cell decomposition} $\{S_{\bf{l}}\}_{\bf{l}\in\bfI}$ of $\R_1([0,T])\times\cdots\times\R_N([0,T])$ as the set $\S=\{S_{\bf{l}}\}_{\bf{l}\in\bfI}:=\{S_l^1\}_{l\in\I_1}\times\cdots\times\{S_l^N\}_{l\in\I_N}$, with $\bfI:=\I_1\times\cdots\times\I_N$. Given a cell decomposition $\{S_{\bf{l}}\}_{\bf{l}\in\bfI}$ of $\R_1([0,T])\times\cdots\times\R_N([0,T])$, we use the notation $\bf{l}_i=(l_i,l_{j_1},\ldots,l_{j_{N_i}})\in\bfI_i:=\I_i\times\I_{j_1}\times\cdots\times\I_{j_{N_i}}$ to denote the indices of the cells where agent $i$ and its neighbors belong at a certain time instant (e.g., at $t=0$) and call it the (initial) cell configuration of agent $i$. Similarly, we use the notation $\bf{l}=(l_1,\ldots,l_N)\in\bfI$ to specify the indices of the cells where all the $N$ agents belong at a given time instant and call it a \textit{global} cell configuration. Thus, given a global cell configuration $\bf{l}$ it is possible to determine the cell configuration $\bf{l}_i$ of agent $i$ through the mapping ${\rm pr}_i:\bfI\to\bfI_i$, namely $\bf{l}_i={\rm pr}_i(\bf{l})$ (see Section 2 for the definition of ${\rm pr}_i(\cdot)$). Before defining the notion of a well posed space-time discretization for the overapproximations of the agents' reachable sets, we provide a class of hybrid feedback laws which are assigned to the free inputs $v_i$ in order to obtain meaningful discrete transitions. The control laws are parameterized by the agents' initial conditions and a set of auxiliary parameters which are responsible for the agent's reachability capabilities. The specific control laws of this class which are exploited for the derivation of the discretizations in this report are provided in the next section.

\begin{dfn}\label{control:class}
\noindent Consider an agent $i\in\N$, cell decompositions $\S_i=\{S_l^i\}_{l\in\I_i}$, $\S_{\kappa}=\{S_l^{\kappa}\}_{l\in\I_{\kappa}}$ of $\R_i([0,T])$ and $\R_{\kappa}([0,T])$, $\kappa\in\N_i$, respectively, a nonempty subset $W_i$ of $\Rat{n}$, and an initial cell configuration $\bf{l}_i$ of $i$. For each $x_{i0}\in S_{l_i}^i$ and $w_i\in W_i$, consider the mapping  $k_{i,\bf{l}_i}(\cdot,\cdot,\cdot;x_{i0},w_i):[0,\infty)\times \Rat{(N_i+1)n}\to\Rat{n}$, parameterized by $x_{i0}\in S_{l_i}^i$ and $w_i\in W_i$. We say that $k_{i,\bf{l}_i}(\cdot)$ satisfies \textit{Property (P)}, if the following conditions are satisfied.

\noindent\textit{(P1)} The mapping $k_{i,\bf{l}_i}(t,x_i,\bf{x}_j;x_{i0},w_i)$ is continuous on $[0,\infty)\times \Rat{(N_i+1)n}\times S_{l_i}^i\times W_i$.

\noindent\textit{(P2)} The mapping $k_{i,\bf{l}_i}(t,\cdot,\cdot;x_{i0},w_i)$ is globally Lipschitz 
continuous on $(x_i,\bf{x}_j)$ (uniformly with respect to $t\in[0,\infty)$, $x_{i0}\in S_{l_i}^i$ and $w_i\in W_i$). $\triangleleft$
\end{dfn}

\noindent We are now in position to formalize our requirement which describes the possibility for an agent to perform a discrete transition, based on the knowledge of its neighbors' discrete positions. In order to provide the definition of well posed transitions for the individual agents, we will consider for each agent $i\in\N$ the following system with disturbances:
\begin{equation} \label{system:disturbances}
\dot{x}_i=g_i(x_i,\bf{d}_j)+v_i,
\end{equation}

\noindent where $d_{j_1},\ldots,d_{j_{N_i}}:[0,\infty)\to\Rat{n}$ (also denoted $d_{\kappa}$, $\kappa\in\N_i$) are continuous functions. The use of this auxiliary system is inspired by the approach in \cite{GaMs12}, where piecewise affine systems with disturbances are exploited for the construction of symbolic models for general nonlinear systems. The map $g_i(\cdot)$ constitutes a bounded Lipchitz extension of the restriction of $f_i(\cdot)$ on $\R_i([0,T])\times\R_{j_1}([0,T])\times\cdots\times\R_{j_{N_i}}([0,T])$ satisfying 
\begin{align}
|g_i(x_i,\bf{x}_j)| & \le M(i),\forall (x_i,\bf{x}_j)\in\Rat{(N_i+1)n}  \label{function:g} \\
|g_i(x_i,\bf{x}_j)-g_i(x_i,\bf{y}_j)| & \le L_1(i)|\bf{x}_j-\bf{y}_i|, \label{dynamics:bound1} \\
|g_i(x_i,\bf{x}_j)-g_i(y_i,\bf{x}_j)| & \le L_2(i)|x_i-y_i|, \label{dynamics:bound2} \\
\forall x_i,y_i \in \R_i^{c_i(\tau)}([0,T-\tau]),\bf{x}_j,\bf{y}_j & \in\R_{j_1}([0,T])\times\cdots\times\R_{j_{N_i}}([0,T]), \nonumber
\end{align}

\noindent with $M(i)$ as given in \eqref{dynamics:bound} and with any constants $L_1(i)$ and $L_2(i)$ such that   
\begin{align}
|f_i(x_i,\bf{x}_j)-f_i(x_i,\bf{y}_j)| & \le L_1(i)|\bf{x}_j-\bf{y}_i|, \nonumber \\
|f_i(x_i,\bf{x}_j)-f_i(y_i,\bf{x}_j)| & \le L_2(i)|x_i-y_i|, \nonumber \\
\forall x_i,y_i \in \R_i^{c_i(\tau)}([0,T-\tau]),\bf{x}_j,\bf{y}_j & \in\R_{j_1}([0,T])\times\cdots\times\R_{j_{N_i}}([0,T]). \nonumber
\end{align}

\noindent This auxiliary system is used in order to provide an overapproximation of each agent's discrete transition capabilities over the horizon, by exploiting the global bounds of the auxiliary vector field $g_i(\cdot)$. Conditions under which these transitions are also implementable by the original system \eqref{single:integrator} are given later in Lemma \ref{lemma:onestep:consistency} and its corollary. Existence of a $g_i(\cdot)$ satisfying these properties is guaranteed by the Kirszbraun Lipschitz extension theorem. Notice that the Lipschitz constants above are evaluated for $x_i$ ranging in the inflated reachable set $\R_i^{c_i(\tau)}([0,T-\tau])$. The reason for this requirement comes from the fact that the individual transition system of each agent will be based on reachability properties of the auxiliary system with disturbances over the time step $[0,\delta t]$, for initial cells lying in the overapproximation $\R_i([0,T-\delta t])$ of agent's $i$ reachable set. Since these cells may in principle contain states which are outside the exact reachable state of the agent, and the disturbances do not necessarily coincide with trajectories of its neighbors over this time interval (they are an overapproximation of these trajectories), it is possible that the solution of \eqref{system:disturbances} over $[0,\delta t]$ lies outside $\R_i([0,T])$. However, by \eqref{input:bound}, \eqref{function:g} and \eqref{constant:ci} it follows that it will lie in the larger set $\R_i^{c_i(\tau)}([0,T-\tau])$.

\begin{dfn} \label{conditionCC}
Consider an agent $i\in\N$, cell decompositions $\S_i=\{S_l^i\}_{l\in\I_i}$, $\S_{\kappa}=\{S_l^{\kappa}\}_{l\in\I_{\kappa}}$ of $\R_i([0,T])$ and $\R_{\kappa}([0,T])$, $\kappa\in\N_i$, respectively, a time step $\delta t<\tau$ and assume that $\S_i$ is compliant with $\R_i([0,T-\delta t])$. Also, consider a nonempty subset $W_i$ of $\Rat{n}$, a cell configuration $\bf{l}_i$ of $i$ with $S_{l_i}^i\subset \R_i([0,T-\delta t])$, a control law
\begin{equation} \label{feedback:for:i}
v_i=k_{i,\bf{l}_i}(t,x_{i},\bf{x}_j;x_{i0},w_i)
\end{equation}

\noindent as in Definition \ref{control:class} that satisfies Property (P), and a cell decomposition $\S_i'=\{S_l^i\}_{l\in\I_i'}$  of $\R_i^{c_i(\tau)}([0,T-\tau])$ with $\S_i'\supset\S_i$, $\I_i'\supset\I_i$ and compliant with $\R_i([0,T])$. Given a vector $w_i\in W_i$, and a cell index $l_i'\in\I_i'$, we say that the \textit{Consistency Condition} is satisfied if the following hold. There exists a point $x_i'\in S_{l_i'}^i$, such that for each initial condition $x_{i0}\in S_{l_i}^i$ and selection of continuous functions $d_{\kappa}:\RgeO\to\Rat{n}$, $\kappa\in\N_i$ satisfying
\begin{equation} \label{disturbance:bounds}
d_{\kappa}(t)\in (S_{l_{\kappa}}^{\kappa}+B((M(\kappa)+v_{\max}(\kappa))t))\cap\R_{\kappa}([0,T]),\forall\kappa\in\N_i,t\in [0,\delta t],
\end{equation}

\noindent the solution $x_i(\cdot)$ of the system with disturbances \eqref{system:disturbances} with $v_i=k_{i,\bf{l}_i}(t,x_{i},\bf{d}_j;x_{i0},w_i)$, satisfies

\begin{equation} \label{xi:consistency:bounds}
d(x_i(t),S_{l_i}^i)<(M(i)+v_{\max}(i))t, \forall t\in (0,\delta t].
\end{equation}

\noindent Furthermore, it holds
\begin{align} 
x_i(\delta t)= & x_i'\in S_{l_i'}^i, \label{xi:in:finalcell} \\
|k_{i,\bf{l}_i}(t,x_{i}(t),\bf{d}_j(t);x_{i0},w_i)|< & v_{\max}(i),\forall  t\in[0,\delta t].  \label{controler:consistency} \quad\triangleleft
\end{align}
\end{dfn}

Notice that when the Consistency Condition is satisfied, agent $i$ can be driven to cell $S_{l_i'}^i$ precisely in time $\delta t$ under the auxiliary dynamics \eqref{system:disturbances}, with the feedback law $k_{i,\bf{l}_i}(\cdot)$ corresponding to the given parameter $w_i$ in the definition. The latter is possible for all disturbances which satisfy \eqref{disturbance:bounds} and capture the possibilities for the evolution of $i$'s neighbors over the time interval $[0,\delta t]$, given the knowledge of its neighbors' cell configuration. Under some additional asumptions, which are provided in Lemma \ref{lemma:onestep:consistency}, the latter transitions can be also implemented by the original system \eqref{single:integrator} and the control law $k_{i,\bf{l}_i}(\cdot)$. We proceed with the definition of a well posed online abstraction for each agent in order to extract a finite transition system. Note that due to the boundedness of the reachable sets over the time horizon we can always select finite overapproximations of the agents' reachable sets which result in compliant cell decompositions in Definition \ref{conditionCC} and thus, derive a finite transition system (by selecting e.g., $\R_i(t)={\rm int}(B(\beta(T+1,|\bf{X}_0|)))$ for all $t\in[0,T]$).

\begin{dfn}\label{well:posed:discretization}
Consider cell decompositions $\S_i=\{S_l^i\}_{l\in\I_i}$ of $\R_i([0,T])$, $i\in\N$, their product decomposition $\S$, a time step $\delta t<\tau$ with $T=\ell\delta t$, nonempty subsets $W_i$, $i\in\N$ of $\Rat{n}$ and assume that each $\S_i$ is compliant with  $\R_i([0,T-\delta t])$.

\noindent \textit{(i)} Given an agent $i\in\N$, a cell decomposition $\S_i'=\{S_l^i\}_{l\in\I_i'}$ of $\R_i^{c_i(\tau)}([0,T-\tau])$ with $\S_i'\supset\S_i$,  $\I_i'\supset\I_i$ and compliant with $\R_i([0,T])$, an initial cell configuration $\bf{l}_i\in\bfI_i$ of $i$ with $S_{l_i}^i\subset\R_i([0,T-\delta t])$, and a cell index $l_i'\in\I_i'$, we say that \textit{the transition} $l_i\overset{\bf{l}_i}{\longrightarrow}l_i'$ \textit{is well posed with respect to the space-time discretization $\S-\delta t$}, if there exist a feedback law  $v_i=k_{i,\bf{l}_i}(\cdot,\cdot,\cdot;x_{i0},w_i)$ as in Definition \ref{control:class} that satisfies Property (P), and a vector $w_i\in W_i$, such that the Consistency Condition of Definition \ref{conditionCC} is fulfilled.

\noindent \textit{(ii)} We say that the \textit{space-time discretization} $\S-\delta t$ \textit{is well posed}, if for each agent $i\in\N$, cell decomposition $\S_i'=\{S_l^i\}_{l\in\I_i'}$ of $\R_i^{c_i(\tau)}([0,T-\tau])$ with $\S_i'\supset\S_i$,  $\I_i'\supset\I_i$ and compliant with $\R_i([0,T])$, and cell configuration $\bf{l}_i$ of $i$, there exists a cell index $l_i'\in\I_i'$ such that the transition $l_i\overset{\bf{l}_i}{\longrightarrow}l_i'$ is well posed with respect to $\S-\delta t$.
\end{dfn}

Based on Definitions~\ref{well:posed:discretization}(i), we proceed by defining the discrete transition system which serves as an abstract model for the behavior of each agent. The transitions are established through the verification of the Consistency Condition which exploits the auxiliary system with disturbances \eqref{system:disturbances}.  

\begin{dfn} \label{individual:ts}
Consider cell decompositions $\S_i=\{S_l^i\}_{l\in\I_i}$ of $\R_i([0,T])$, $i\in\N$, their product decomposition $\S$, a time step $\delta t<\tau$ with $T=\ell\delta t$, nonempty subsets $W_i$, $i\in\N$ of $\Rat{n}$ and assume that each $\S_i$ is compliant with  $\R_i([0,T-\delta t])$.  The \textit{individual transition system} $TS_i:=(Q_i,Q_{0i},Act_i,$ $\longrightarrow_i)$ of each agent $i\in\N$ is defined as:

\noindent \textbullet \; State set $Q_i:=\I_i$ (the indices of the cell decomposition $\S_i$);

\noindent \textbullet \; Initial state set $Q_{0i}:=\{l_i\in\I_i:X_{i0}\in S_{l_i}^i\}$;

\noindent \textbullet \; Actions $Act_i:=\bfI_i$ (the cell configurations of $i$);

\noindent \textbullet \; Transition relation $\longrightarrow_i\subset Q_i\times Act_i\times Q_i$ defined as follows. For any $l_i,l_i'\in Q$ and $\bf{l}_i=(l_i,l_{j_1},\ldots,l_{j_{N_i}})\in\bfI_i$, $l_i\overset{\bf{l}_i}{\longrightarrow_i}l_i'$, iff $l_i\overset{\bf{l}_i}{\longrightarrow}l_i'$ is well posed (implying also that $S_{l_i}^i\subset\R_i([0,T-\delta t])$).
\end{dfn}

\begin{rem}
Notice that the auxiliary cell decomposition $\S_i'$ with indices $\I_i'$ which is exploited for the verification of the Consistency Condition can provide according to Definition~\ref{well:posed:discretization}(i) well posed transitions which lead to a cell $S_{l_i'}^i$ outside $\R_i([0,T])$. These transitions are excluded from the definition of each agent's individual transition system, since they do not capture any possible behavior of the system over $[0,T]$, and the state set of the transition system contains only the indices of $\I_i$, namely, of the original cell decomposition $\S_i$. In particular, the transitions of possible interest over the horizon are the ones where the initial and final state of the agent lie in the exact reachable sets over $[0,T-\delta t]$ and $[0,T]$, respectively. Since the latter cannot be computed explicitly in principle, we impose this requirement for their overapproximations $\R_i([0,T-\delta t])$ and $\R_i([0,T])$. 
In addition, for the case where the cells of an agent and its neighbors have nonempty intersection with the corresponding agents' reachable cells at certain time instant $t=m\delta t$ with $m\in\{0,\ldots,\ell-1\}$, it will be shown in Corollary \ref{corollary:individual:post} that there is always an outgoing transition for well posed discretizations. 
\end{rem}

In the subsequent analysis we will consider well posed discretizations which implies that their time step $\delta t$ has been selected so that $T=\ell\delta t$ and will focus on transition sequences of length $m\le\ell$ originating from cells which contain the agents' initial positions $X_{i0}$, $i\in\N$. 
Such sequences are defined below for the individual transition system of each agent. In addition, it will be shown in the sequel that the projection of a transition sequence originating from the discrete state containing $\bf{X}_0$ in the product discrete model (of all agents) to the individual transition system of each agent will provide such a sequence of transitions for each agent, which can also be implemented by the continuous time system. 

\begin{dfn} \label{dfn:strongly:well:posed:seq}
Consider cell decompositions $\S_i=\{S_l^i\}_{l\in\I_i}$ of $\R_i([0,T])$, $i\in\N$, their product decomposition $\S$, a time step $\delta t<\tau$ with $T=\ell\delta t$, nonempty subsets $W_i$, $i\in\N$ of $\Rat{n}$ and assume that each $\S_i$ is compliant with  $\R_i([0,T-\delta t])$. Given an agent $i\in\N$, an integer $m\in\{1,\ldots,\ell\}$, cell configurations $\bf{l}^{\kappa}=(l_i^{\kappa},l_{j_1}^{\kappa},\ldots,l_{j_{N_i}}^{\kappa})\in\bfI_i$, $\kappa=0,\ldots,m-1$ of $i$ and a cell index $l_i^m\in\I_i$, we say that  $\bf{l}_i^0\bf{l}_i^1\cdots\bf{l}_i^{m-1}l_i^m$ is \textit{a strongly well posed transition sequence of order $m$ (with respect to $\S-\delta t$)}, if it holds $X_{i0}\in S_{l_i^0}^i$, and  $l_i^{\kappa}\overset{\bf{l}_i^{\kappa}}{\longrightarrow_i}l_i^{\kappa+1}$. We also define $\bf{l}_i^0$ as \textit{a strongly well posed transition sequence of order 0} if $X_{i0}\in S_{l_i^0}^i$.
\end{dfn}

\noindent The following lemma establishes that for well posed discretizations and cell configurations of all agents which intersect their exact reachable cells at a certain time instant $t=m\delta t$ with $m\in\{0,\ldots,\ell-1\}$ there exists a transition for each agent that can be implemented by the continuous time system \eqref{single:integrator}.  

\begin{lemma} \label{lemma:onestep:consistency}
Consider cell decompositions $\S_i=\{S_l^i\}_{l\in\I_i}$ of $\R_i([0,T])$, $i\in\N$, their product $\S$, a time step $\delta t<\tau$ with $T=\ell\delta t$, nonempty subsets $W_i$, $i\in\N$ of $\Rat{n}$ and assume that each $\S_i$ is compliant with  $\R_i([0,T-\delta t])$ and that the space-time discretization $\S-\delta t$ is well posed. Also, consider a cell configuration $\bf{l}=(l_1,\ldots,l_N)$, an integer $m\in\{0,\ldots,\ell-1\}$, an input $v=(v_1,\ldots,v_N)\in\U$ and assume that each component $x_i(\cdot,\bf{X}_0;v)$ of the solution of \eqref{single:integrator} satisfies $x_i(m\delta t,\bf{X}_0;v)\in S_{l_i}^i$. 

\noindent \textit{(i)} Then, it holds that ${\rm Post}_i(l_i,{\rm pr}_i(\bf{l}))\ne\emptyset$ for all $i\in\N$. In particular, 
\begin{equation} \label{post:indiv:characterization}
{\rm Post}_i(l_i,{\rm pr}_i(\bf{l}))=\{l_i'\in\I_i':l_i\overset{\bf{l}_i}{\longrightarrow}l_i'\;\textup{is well posed}\}\subset\I_i,
\end{equation}

\noindent for any cell decomposition  $\S_i'=\{S_l^i\}_{l\in\I_i'}$ of $\R_i^{c_i(\tau)}([0,T-\tau])$ with $\S_i'\supset\S_i$,  $\I_i'\supset\I_i$ and compliant with $\R_i([0,T])$, and is uniquely defined, irrespectively of the cell decomposition $\S_i'$. 

\noindent \textit{(ii)} In addition, for any selection of $l_i'\in{\rm Post}_i(l_i,{\rm pr}_i(\bf{l}))$, $i\in\N$, the following hold. There exist feedback laws $v_i=k_{i,\bf{l}_i}$ as in \eqref{feedback:for:i} and vectors $w_i\in W_i$ for all $i\in\N$ such that the solution $\xi(\cdot)$ of the closed loop system \eqref{single:integrator}, \eqref{feedback:for:i} with initial condition $\xi(0)=x(m\delta t,\bf{X}_0;v)$, $i\in\N$ satisfies $\xi_i(\delta t)\in S_{l_i'}^i$ and $|k_{i,\bf{l}_i}(t,\xi_{i}(t),\mbf{\xi}_j(t);x_{i0},w_i)|\le  v_{\max}(i)$ for all $t\in[0,\delta t]$ and $i\in\N$. Furthermore, there exists $u\in\U$ with $u(t)=v(t)$ for all $t\in[0,m\delta t)$, such that the solution of \eqref{single:integrator} satisfies $x_i((m+1)\delta t,\bf{X}_0;u)\in S_{l_i'}^i$ for all $i\in\N$.
\end{lemma}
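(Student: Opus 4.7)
The plan is to prove Part~(ii) first by identifying the closed-loop trajectory under the original dynamics with the auxiliary trajectory furnished by the Consistency Condition, and then to read off Part~(i). I first verify the standing hypothesis $S_{l_i}^i\subset\R_i([0,T-\delta t])$ for every $i$: since $m\le\ell-1$, the point $x_i(m\delta t,\bf{X}_0;v)$ lies in $\R_i([0,T-\delta t])\cap S_{l_i}^i$, and compliance of $\S_i$ with $\R_i([0,T-\delta t])$ forces $S_{l_i}^i\subset\R_i([0,T-\delta t])$. By well posedness of $\S-\delta t$, for any admissible auxiliary decomposition $\S_i'$ there exists at least one target $l_i'\in\I_i'$ with an associated feedback $k_{i,\bf{l}_i}$ and parameter $w_i\in W_i$ satisfying the Consistency Condition, and I pick one such triple for each $i$.

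For Part~(ii) I consider the closed-loop version of \eqref{single:integrator} with $v_i=k_{i,\bf{l}_i}(t,x_i,\bf{x}_j;\xi_i(0),w_i)$ and initial state $\xi(0):=x(m\delta t,\bf{X}_0;v)$. Property~(P) yields local existence and uniqueness of the solution $\xi$, and I set
\[
T^*:=\sup\!\Big\{t\in[0,\delta t]:|k_{i,\bf{l}_i}(s,\xi_i(s),\mbf{\xi}_j(s);\xi_i(0),w_i)|\le v_{\max}(i),\;\forall i\in\N,\,\forall s\in[0,t]\Big\};
\]
strict inequality in \eqref{controler:consistency} at $s=0$ together with continuity gives $T^*>0$. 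On $[0,T^*]$ I would establish three facts in tandem: (a)~admissibility of each $k_i$ means the concatenation of $v|_{[0,m\delta t)}$ with this feedback is an input in $\U$, so by causality and the semigroup property $\xi_i$ is a shifted piece of a reachable trajectory and satisfies $\xi_i(t)\in\R_i([0,T])$; (b)~the bound $|\dot{\xi}_\kappa|\le M(\kappa)+v_{\max}(\kappa)$ from \eqref{dynamics:bound} and the definition of $T^*$, together with $\xi_\kappa(0)\in S_{l_\kappa}^\kappa$, place $\xi_\kappa$ inside the disturbance tube \eqref{disturbance:bounds}; (c)~since $\xi$ stays in the reachable sets on which $g_i\equiv f_i$, the vector field driving $\xi_i$ matches the one driving the auxiliary solution $x_i$ of \eqref{system:disturbances} with $d_\kappa:=\xi_\kappa$, and uniqueness for the auxiliary Cauchy problem (from Lipschitzness \eqref{dynamics:bound2} of $g_i$ on $\R_i^{c_i(\tau)}([0,T-\tau])$ and Property~(P2)) forces $\xi_i\equiv x_i$ on $[0,T^*]$. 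The Consistency bound \eqref{controler:consistency}, applied to any continuous extension of $\xi_\kappa$ past $T^*$ still satisfying \eqref{disturbance:bounds}, then gives $|k_i|<v_{\max}(i)$ strictly on $[0,T^*]$, and continuity of $k_i\circ(\xi_i,\mbf{\xi}_j)$ excludes $T^*<\delta t$. Hence $T^*=\delta t$, the identification extends to the whole interval, and \eqref{xi:in:finalcell} gives $\xi_i(\delta t)=x_i'\in S_{l_i'}^i$.

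Part~(i) is now immediate: $\xi_i(\delta t)\in S_{l_i'}^i$ is a reachable state of agent $i$ at time $(m+1)\delta t\le T$, so $S_{l_i'}^i\cap\R_i([0,T])\ne\emptyset$, and compliance of $\S_i'$ with $\R_i([0,T])$ forces $S_{l_i'}^i\subset\R_i([0,T])$, i.e., $l_i'\in\I_i$. This establishes both the inclusion in \eqref{post:indiv:characterization} and its independence of the auxiliary $\S_i'$ (the well posedness of a transition into a target cell $S_{l_i'}^i\in\S_i$ depends only on $\S_i$-data), while nonemptiness of ${\rm Post}_i$ follows from well posedness of $\S-\delta t$. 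For the input $u\in\U$ completing Part~(ii), I set $u(t):=v(t)$ on $[0,m\delta t)$ and $u_i(t):=k_{i,\bf{l}_i}(t-m\delta t,x_i(t,\bf{X}_0;u),\bf{x}_j(t,\bf{X}_0;u);\xi_i(0),w_i)$ on $[m\delta t,(m+1)\delta t]$; the bound $|k_i|\le v_{\max}(i)$ yields $u\in\U$, and causality together with time invariance of the transition map and the identification $\xi(\cdot)=x(\cdot+m\delta t,\bf{X}_0;u)$ give $x_i((m+1)\delta t,\bf{X}_0;u)=\xi_i(\delta t)\in S_{l_i'}^i$.

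The delicate step is the bootstrap, where admissibility of the $k_i$'s, the derivative-based disturbance bound, and the identity $g_i=f_i$ along $\xi$ must be established simultaneously without circularity. The clean break is the sup-argument with strict inequality: the openness of the condition $|k_i|<v_{\max}(i)$ provided by \eqref{controler:consistency} produces a full neighborhood of any putative $T^*<\delta t$ on which admissibility still holds, contradicting the definition of $T^*$ and propagating the identification $\xi_i=x_i$ across the entire horizon $[0,\delta t]$.
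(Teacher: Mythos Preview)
Your proposal is correct and mirrors the paper's proof: both run a continuity/bootstrap argument to identify the closed-loop trajectory $\xi$ with the auxiliary solution of \eqref{system:disturbances}, embed $\xi$ as a time-shifted reachable trajectory via causality and the semigroup property to keep it inside $\R_i([0,T])$, and then use compliance of $\S_i'$ to force $l_i'\in\I_i$. The only differences are cosmetic---you take the control bound $|k_i|\le v_{\max}(i)$ as the bootstrap invariant and close via the strict inequality in \eqref{controler:consistency}, whereas the paper tracks the pair ``$\xi_i(t)\in\R_i([0,T])$'' and ``$d(\xi_i(t),S_{l_i}^i)<(M(i)+v_{\max}(i))t$'' and closes via \eqref{xi:consistency:bounds} together with openness of $\R_i([0,T])$; the paper also treats the maximal existence interval $[0,T_{\max})$ explicitly before concluding $T_{\max}>\delta t$, a point your definition of $T^*$ glosses over but which is harmless once admissibility on $[0,T^*]$ identifies $\xi$ with a globally defined reachable trajectory.
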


\begin{proof}[Proof of (i)]
Consider for each agent $i\in\N$ a cell decomposition $\S_i'=\{S_{l}^i\}_{l\in\I_i'}$ of $\R_i^{c_i(\tau)}[0,T-\tau])$ with $\S_i'\supset \S_i$, $\I_i'\supset \I_i$, and compliant with $\R_i([0,T])$. Without any loss of generality we will assume that $m>0$, since the analysis when $m=0$ constitutes a special case of the proof below. Also, since $m\le\ell-1$ we have for each $i\in\N$ that $x_i(m\delta t,\bf{X}_0;v)\in\R_i([0,T-\delta t])$, implying that $S_{l_i}^i\cap \R_i([0,T-\delta t])\ne \emptyset$ and thus, since $\S_i$ is compliant with $\R_i([0,T-\delta t])$, that $S_{l_i}^i\subset\R_i([0,T-\delta t])$. Hence, since the space-time discretization is well posed, given the cell configuration ${\rm pr}_i(\bf{l})$ of $i$, there exists a cell index $l_i'\in\I_i'$ such that the transition $l_i\overset{\bf{l}_i}{\longrightarrow}l_i'$ is well posed. The latter implies existence of a feedback law $v_i=k_{i,\bf{l}_i}(t,x_{i},\bf{d}_j;x_{i0},w_i)$ (see \eqref{feedback:for:i}) and $x_i'\in S_{l_i'}^i$ such that the requirements of the Consistency Condition are fulfilled for the system with disturbances \eqref{system:disturbances}. Next, we select for each agent the initial condition $x_{i0}=x_i(m\delta t,\bf{X}_0;v)$ and denote by $\xi(\cdot)$ the solution of the closed loop system \eqref{single:integrator} with $v_i=k_{i,\bf{l}_i}$, $i\in\N$ as selected above. By the local Lipschitz property on the $f_i$'s and $k_{i,\bf{l}_i}$'s, it follows that the system has a unique solution defined on the right maximal interval $[0,T_{\max})$. We will show that $T_{\max}>\delta t$, and that for each agent $i\in\N$, the $i$-th component of the solution coincides with the solution of system \eqref{system:disturbances} on $[0,\delta t]$ with $\bf{d}_j=\mbf{\xi}_j$ and the same initial condition $x_{i0}$.

Notice that since $\xi_i(0)=x_{i0}\in S_{l_i}^i\subset \R_i([0,T-\delta t])$ and $\xi_{\kappa}(0)=x_{\kappa 0}\in S_{l_{\kappa}}^{\kappa}\subset \R_{\kappa}([0,T-\delta t])$, $\kappa\in\N_i$, it follows that the requirements of the Consistency Condition are fulfilled with $\bf{d}_j=\mbf{\xi}_j$ and $x_i=\xi_i$ at $t=0$, which implies that the control laws $k_{i,\bf{l}_i}$ satisfy $|k_{i,\bf{l}_i}(0,\xi_i(0),\mbf{\xi}_j(0);x_{i0},w_i)|<v_{\max}(i)$ for all $i\in\N$. The latter in conjunction with continuity of the functions $\xi_i(\cdot)$,  $k_{i,\bf{l}_i}(\cdot)$, the fact that $\xi_i(0)=x_{i0}\in S_{l_i}^i\subset \R_i([0,T])$ which is open, \eqref{system:disturbances}, and \eqref{function:g}, implies that there exists $\delta>0$ such that 
\begin{align}
\xi_i(t) & \in\R_i([0,T]), \forall t\in[0,\delta],i\in\N, \label{xi:property1} \\
d(\xi_i(t),S_{l_i}^i) & < (M(i)+v_{\max}(i))t, \forall t\in[0,\delta],i\in\N. \label{xi:property2}
\end{align}

\noindent We claim that the latter properties hold for all  $t\in[0,\delta t]\cap[0,T_{\max})$. Indeed, assume on the contrary that there exist a time $\sigma\in(0,\delta t]\cap(0,T_{\max})$ and an agent $i\in\N$, such that  
\begin{equation} \label{sigma:contradiction}
\xi_i(\sigma)\notin\R_i([0,T])\;{\rm or}\; d(\xi_i(\sigma),S_{l_i}^i) \ge (M(i)+v_{\max}(i))\sigma.
\end{equation}

\noindent Then, if we define 
\begin{align} 
\bar{t}:=\sup\{t\in(0,\delta t]\cap(0,T_{\max}):\xi_i(s) & \in\R_i([0,T])\;{\rm and}\; \nonumber \\
d(\xi_i(s),S_{l_i}^i)< (M(i) & +v_{\max}(i))s,\forall s\in(0,t],i\in\N\}, \label{time:tbar}
\end{align}

\noindent it follows from \eqref{xi:property1}, \eqref{xi:property2} and \eqref{sigma:contradiction} that 
\begin{equation} \label{tbar:properties}
0<\bar{t}\le\delta t;\quad \bar{t}<T_{\max},
\end{equation}

\noindent where the right hand sides of these inequalities are based on the fact that $\sigma\le\delta t$ and $\sigma<T_{\max}$. In addition, by recalling that $\R_i([0,T])$ is open and invoking continuity of $\xi(\cdot)$, it follows from \eqref{time:tbar} that there exists an agent $\iota\in\N$  such that 
\begin{equation} \label{iota:contradiction}
\xi_{\iota}(\bar{t})\notin\R_{\iota}([0,T])\;{\rm or}\; d(\xi_{\iota}(\bar{t}),S_{l_{\iota}}^{\iota}) = (M(\iota)+v_{\max}(\iota))\bar{t}.
\end{equation}

\noindent We proceed by considering for each agent $i\in\N$ the solution $\zeta_i(\cdot)$ of the system with disturbances \eqref{system:disturbances} with $\bf{d}_j=\mbf{\xi}_j$, initial condition $x_{i0}$ and the selected $w_i$ at the beginning of the proof. Then, for each $i\in\N$ it follows from \eqref{time:tbar} that each disturbance $d_{\kappa}(\cdot)$, $\kappa\in\N_i$ satisfies \eqref{disturbance:bounds} for all $t\in[0,\bar{t})$. Thus, it follows from causality of the solution $\zeta_i(\cdot)$ with respect to the disturbances and the Consistency Condition that
\begin{align}
\zeta_i(t) & \in\R_i([0,T]), \forall t\in[0,\bar{t}],i\in\N, \label{zetai:property1} \\
d(\zeta_i(t),S_{l_i}^i) & < (M(i)+v_{\max}(i))t, \forall t\in[0,\bar{t}],i\in\N, \label{zetai:property2} \\
|k_{i,\bf{l}_i}(t,\zeta_{i}(t),\mbf{\xi}_j(t);x_{i0},w_i)| & \le v_{\max}(i), \forall t\in[0,\bar t], i\in\N. \label{zetai:property3}
\end{align}

\noindent In addition, since $g_i(x_i,\bf{x}_j)=f_i(x_i,\bf{x}_j)$ for all $x_i\in\R_i([0,T])$, $x_{\kappa}\in\R_{\kappa}([0,T])$, $\kappa\in\N_i$, and by virtue of \eqref{time:tbar} and \eqref{tbar:properties} the solution $\xi(\cdot)$ is defined on $[0,\bar{t}]$ and satisfies $\xi_i(t)\in\R_i([0,T])$ for all $t\in[0,\bar{t})$ and $i\in\N$, it follows from \eqref{zetai:property1} and standard ODE arguments that $\zeta_i(\cdot)$ and $\xi_i(\cdot)$ coincide on $[0,\bar{t}]$ for all $i\in\N$. Hence, we obtain from \eqref{zetai:property2} that $ d(\xi_{\iota}(\bar{t}),S_{l_{\iota}}^{\iota}) < (M(\iota)+v_{\max}(\iota))\bar{t}$. Thus, in order to reach a contradiction with \eqref{iota:contradiction}, we need to also prove that $\xi_{\iota}(\bar{t})\in\R_{\iota}([0,T])$. Notice first, that since $\zeta_i(t)=\xi_i(t)$ for all $t\in[0,\bar{t}]$, it follows from \eqref{zetai:property3} that 
\begin{equation} \label{control:laws:along:solutions}
|k_{i,\bf{l}_i}(t,\xi_{i}(t),\mbf{\xi}_j(t);x_{i0},w_i)|\le v_{\max}(i), \forall t\in[0,\bar t], i\in\N.
\end{equation}
 
\noindent Thus, it turns out that if we select the input $\omega=(\omega_1,\ldots,\omega_N):\RgeO\to\Rat{Nn}$ such that for each $i\in\N$ it holds 
\begin{align*}
\omega_i(t):= & k_{i,\bf{l}_i}(t,\xi_{i}(t),\mbf{\xi}_j(t);x_{i0},w_i), t\in[0,\bar{t}), \\
|\omega_i(t)|\le & v_{\max}(i), t\ge\bar{t},
\end{align*} 

\noindent we obtain that $|\omega_i(t)|\le v_{\max}(i)$, $\forall t\ge 0$ and hence, that $\omega\in\U$. In addition, since by \eqref{tbar:properties} the solution $\xi(\cdot)$ is defined on $[0,\bar{t}]$, it follows from standard ODE arguments that 
\begin{equation} \label{xi:eq:eta}
\xi(t)=\eta(t,x_0;\omega),\forall t\in[0,\bar{t}),
\end{equation}

\noindent with $\eta(\cdot)$ being the solution of \eqref{single:integrator} with input $\omega(\cdot)$ and initial condition $x_0$. Define now $u=(u_1,\ldots,u_N):\RgeO\to\Rat{Nn}$ with 
\begin{align}
u_i(t):= & v_i(t), t\in[0,m\delta t), \nonumber \\
u_i(t):= & \omega_i(t), t\in[m\delta t,m\delta t+\bar{t}), \nonumber \\
|u_i(t)|\le & v_{\max}(i), t\ge m\delta t+\bar{t}, \label{input:ui}
\end{align} 

\noindent for all $i\in\N$ and notice that $u\in\U$. Next, by using the transition map $\varphi(\cdot)$ introduced for system \eqref{single:integrator} at the beginning of the section, we have that $\eta(t,x_0;\omega)=\varphi(t,0,x_0;\omega)$, $\forall t\in[0,\bar{t}]$ and by time invariance, causality and \eqref{input:ui}, which implies that $\omega(t)={\rm Sh}_{m\delta t}(u(t))$, $\forall t\in[0,\bar{t})$, that $\varphi(t,0,x_0;\omega)=\varphi(m\delta t+t,m\delta t,x_0;u)$, $\forall t\in[0,\bar{t}]$. In addition, since by causality and \eqref{input:ui} it holds  $x_0=\varphi(m\delta t,0,\bf{X}_0;u)$, we deduce from the semigroup property that $\varphi(t,0,x_0;\omega)=\varphi(m\delta t+t,m\delta t,\varphi(m\delta t,0,\bf{X}_0;u);u)=\varphi(m\delta t+t,0,\bf{X}_0;u)=x(m\delta t+t,\bf{X}_0;u)$, $\forall t\in[0,\bar{t}]$, and thus, it follows from the above derivations that $\eta(t,x_0;\omega)=x(m\delta t+t,\bf{X}_0;u)$, $\forall t\in[0,\bar{t}]$. Hence, we have from the latter and \eqref{xi:eq:eta} that  
\begin{equation} \label{xi:eq:ex}
\xi(t)=x(m\delta t+t,\bf{X}_0;u), \forall t\in[0,\bar{t}].
\end{equation}

\noindent Since $\bar{t}\le\delta t$ and $m<\ell$, implying that $m\delta t+\bar{t}\le\ell\delta t=T$, and hence, that $x_{\iota}(m\delta t+\bar{t},\bf{X}_0;u)\in\R_{\iota}([0,T])$, we obtain from \eqref{xi:eq:ex} that also $\xi_{\iota}(\bar{t})\in\R_{\iota}([0,T])$, which establishes the desired contradiction. Consequently, we deduce that \eqref{xi:property1} and \eqref{xi:property2} hold for all  $t\in[0,\delta t]\cap[0,T_{\max})$. We next show that $T_{\max}>\delta t$. Indeed, assume on the contrary that $T_{\max}\le\delta t$. By reasoning as above, we obtain that \eqref{xi:eq:ex} holds for all $t\in[0,T_{\max})$ and $i\in\N$, i.e., that $\xi(t)=x(m\delta t+t,\bf{X}_0;u)$, for all $t\in[0,T_{\max})$. Since $x(\cdot,\bf{X}_0;u)$ is defined for all positive times by forward completeness of \eqref{system:disturbances}, we obtain that $\lim_{t\nearrow T_{\max}}\xi(t)=x(m\delta t+T_{\max},\bf{X}_0;u)$, contradicting maximality of $[0,T_{\max})$. Hence, having established that $T_{\max}>\delta t$, we get that \eqref{xi:property1} and \eqref{xi:property2} hold for all $t\in[0,\delta t]$, implying that 
\begin{equation} \label{xiatdeltat:inRi} 
\xi_i(\delta t)\in\R_i([0,T]),\forall i\in\N.
\end{equation}

\noindent In addition, for each $i\in\N$ we deuce from the fact that \eqref{xi:property1} and \eqref{xi:property2} hold for all $t\in[0,\delta t]$ and the Consistency Condition applied to the system with disturbances \eqref{system:disturbances} with $\bf{d}_j=\mbf{\xi}_j$, initial condition $x_{i0}$, the selected $w_i$ at the beginning of the proof, and similar arguments as above, that 
\begin{equation}\label{xiatdeltat:inSli}
\xi_i(\delta t)=x_i'\in S_{l_i'}^i,\forall i\in\N.
\end{equation}

\noindent The latter, by virtue of \eqref{xiatdeltat:inRi} and the fact that $\S_i'$ is compliant with $\R_i([0,T])$, $\S_i'\supset S_i$ and $\I_i'\supset \I_i$ implies that $l_i'\in \I_i$. Indeed, since $ S_{l_i'}^i\cap\R_i([0,T])\ne\emptyset$ we have from compliance that $S_{l_i'}^i\subset\R_i([0,T])$. Hence, by the definition of a cell decomposition, there exists $S_{l_i}^i\in\S_i$ with ${\rm int}(S_{l_i}^i)\cap{\rm int}(S_{l_i'}^i)\ne\emptyset$. Since $\S_i'\supset \S_i$, implying that also $S_{l_i}^i\in\S_i'$ it follows again from the definition of a cell decomposition that necessarily $S_{l_i}^i=S_{l_i'}^i$ and thus, from the fact that $\I_i'\supset \I_i$, we get that $l_i'\in \I_i$. From the latter and by recalling that $S_{l_i}^i\subset\R_i([0,T-\delta t])$ and  $l_i\overset{\bf{l}_i}{\longrightarrow}l_i'$ is well posed, we deduce that $l_i\overset{\bf{l}_i}{\longrightarrow_i}l_i'$, implying that ${\rm Post}_i(l_i,{\rm pr}_i(\bf{l}))\ne\emptyset$. In addition, since $l_i'$ was selected as an arbitrary cell for which $l_i\overset{\bf{l}_i}{\longrightarrow}l_i'$ is well posed, we deuce that \eqref{post:indiv:characterization} also holds.

\noindent \textit{Proof of (ii).} The result of Part (ii) follows from the proof of Part (i), by selecting the feedback laws $k_{i,\bf{l}_i}$ as in Part (i) which establishes \eqref{control:laws:along:solutions} for all $t\in[0,\delta t]$, and $u(\cdot)$ as in \eqref{input:ui}, which by virtue of \eqref{xiatdeltat:inSli} and the fact that \eqref{xi:eq:ex} can be verified for all $t\in[0,\delta t]$, implies that $x_i((m+1)\delta t,\bf{X}_0;u)\in S_{l_i'}^i$ for all $i\in\N$.
\end{proof}

\begin{corollary} \label{corollary:individual:post}
Consider cell decompositions $\S_i=\{S_l^i\}_{l\in\I_i}$ of $\R_i([0,T])$, $i\in\N$, their product $\S$, a time step $\delta t<\tau$ with $T=\ell\delta t$, nonempty subsets $W_i$, $i\in\N$ of $\Rat{n}$ and assume that each $\S_i$ is compliant with  $\R_i([0,T-\delta t])$ and that the space-time discretization $\S-\delta t$ is well posed. Also, consider an agent $i$, a cell configuration $\bf{l}_i$ of $i$, an integer $m\in\{0,\ldots,\ell-1\}$, an input $v\in\U$ and assume that each component $x_{\kappa}(\cdot,\bf{X}_0;v)$, $\kappa\in\N_i\cup\{i\}$ of the solution of \eqref{single:integrator} satisfies $x_{\kappa}(m\delta t,\bf{X}_0;v)\in S_{l_{\kappa}}^{\kappa}$. 
Then, it holds that ${\rm Post}_i(l_i,\bf{l}_i)=\{l_i'\in\I_i':l_i\overset{\bf{l}_i}{\longrightarrow}l_i'\;\textup{is well posed}\}\subset\I_i$ for any cell decomposition $\S_i'$ of $\R_i^{c_i(\tau)}([0,T-\tau])$ as in Lemma \ref{lemma:onestep:consistency}.
\end{corollary}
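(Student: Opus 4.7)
The plan is to reduce the corollary to a single-agent version of Lemma~\ref{lemma:onestep:consistency}. The Consistency Condition, and hence the well-posedness of a transition $l_i\overset{\bf{l}_i}{\longrightarrow}l_i'$, is purely local to agent $i$: the neighbors enter only through the continuous disturbance signals $d_\kappa$ required to satisfy \eqref{disturbance:bounds}. Consequently, the hypothesis of Lemma~\ref{lemma:onestep:consistency} that \emph{every} agent lies in a prescribed cell at time $m\delta t$ can be weakened to the statement that only agent $i$ and its neighbors do so, provided we can exhibit valid disturbance signals directly from the information at hand.

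The natural choice is to set $d_\kappa(t):=x_\kappa(m\delta t+t,\bf{X}_0;v)$ for $t\in[0,\delta t]$ and $\kappa\in\N_i$. These are continuous in $t$; they take values in $\R_\kappa([0,T])$ since $m\delta t+t\le T$; and starting from $d_\kappa(0)\in S_{l_\kappa}^\kappa$, the bound $|\dot x_\kappa|\le M(\kappa)+v_{\max}(\kappa)$ obtained from \eqref{dynamics:bound} and \eqref{input:bound} (valid because $x_\kappa(\cdot,\bf{X}_0;v)\in\R_\kappa([0,T])$ on this interval) yields $d_\kappa(t)\in S_{l_\kappa}^\kappa+B((M(\kappa)+v_{\max}(\kappa))t)$, verifying \eqref{disturbance:bounds}. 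Since $m\le\ell-1$ one has $x_i(m\delta t,\bf{X}_0;v)\in\R_i([0,T-\delta t])$, so $S_{l_i}^i\cap\R_i([0,T-\delta t])\ne\emptyset$, and compliance of $\S_i$ gives $S_{l_i}^i\subset\R_i([0,T-\delta t])$. Well-posedness of $\S-\delta t$ then produces, for any admissible $\S_i'$, a cell index $l_i'\in\I_i'$ and a feedback $k_{i,\bf{l}_i}$ whose auxiliary closed-loop solution meets the Consistency Condition with $x_{i0}:=x_i(m\delta t,\bf{X}_0;v)$ and the $d_\kappa$ above.

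It remains to mirror the supremum/contradiction argument of Lemma~\ref{lemma:onestep:consistency}(i), restricted to agent $i$ alone, in order to identify the auxiliary solution with the true closed-loop trajectory $\xi_i(\cdot)$ of \eqref{single:integrator} obtained by applying $v_i=k_{i,\bf{l}_i}$ together with the neighbors' actual states as arguments. Whenever $\xi_i(t)\in\R_i([0,T])$ one has $g_i=f_i$ along the trajectory, so the two flows coincide by ODE uniqueness. Defining an input $u\in\U$ that equals $v$ on $[0,m\delta t)$ and is driven by $k_{i,\bf{l}_i}$ on $[m\delta t,m\delta t+\bar t]$ (with arbitrary bounded completion beyond), the same semigroup and time-invariance manipulation used in the proof of the lemma yields $\xi_i(\cdot)=x_i(m\delta t+\cdot,\bf{X}_0;u)$ up to the supremum time, ruling out any premature exit from $\R_i([0,T])$. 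Hence $\xi_i(\delta t)\in S_{l_i'}^i\cap\R_i([0,T])$; compliance of $\S_i'$ with $\R_i([0,T])$ together with $\S_i'\supset\S_i$ and the disjoint-interior property of cell decompositions force $l_i'\in\I_i$, so $l_i\overset{\bf{l}_i}{\longrightarrow_i}l_i'$ by Definition~\ref{individual:ts}, i.e., $l_i'\in{\rm Post}_i(l_i,\bf{l}_i)$. Applied to an arbitrary $l_i'\in\I_i'$ for which $l_i\overset{\bf{l}_i}{\longrightarrow}l_i'$ is well posed, this yields the claimed equality and independence of the choice of $\S_i'$.

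The main obstacle is bookkeeping rather than conceptual: one must check that the contradiction argument from Lemma~\ref{lemma:onestep:consistency} uses only data associated with agent $i$ and its neighbors, and that the auxiliary input $u\in\U$ constructed there can be formed from the known components $x_\kappa(m\delta t+\cdot,\bf{X}_0;v)$ with $\kappa\in\N_i\cup\{i\}$ and arbitrary bounded completions for the remaining agents, who do not influence $\xi_i(\cdot)$ on $[0,\delta t]$.
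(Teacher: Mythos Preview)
Your approach tries to localize the argument of Lemma~\ref{lemma:onestep:consistency} to agent $i$ alone, but there is a genuine gap at the step where you identify the auxiliary trajectory $\xi_i$ with a component $x_i(m\delta t+\cdot,\bf{X}_0;u)$ of a true solution of \eqref{single:integrator}. For this identification you need the neighbors' components under $u$ to coincide on $[m\delta t,m\delta t+\bar t]$ with your chosen disturbances $d_\kappa(t)=x_\kappa(m\delta t+t,\bf{X}_0;v)$. If, however, $i\in\N_\kappa$ for some $\kappa\in\N_i$ (or more generally there is a directed path from $i$ back into $\N_i$ in $\G$), then replacing $v_i$ by $k_{i,\bf{l}_i}$ changes $x_i$, which feeds into $\dot x_\kappa=f_\kappa(x_\kappa,\ldots,x_i,\ldots)+u_\kappa$. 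Forcing $x_\kappa$ to remain equal to $d_\kappa$ would require $u_\kappa$ to absorb the mismatch $f_\kappa(\ldots,x_i(\cdot,\bf{X}_0;v),\ldots)-f_\kappa(\ldots,\xi_i,\ldots)$, and there is no reason this keeps $|u_\kappa|\le v_{\max}(\kappa)$. Hence $u\notin\U$ in general, the identification fails, and you cannot conclude $\xi_i(\delta t)\in\R_i([0,T])$ along this route. Your closing remark that the remaining agents ``do not influence $\xi_i$'' is correct but beside the point: the difficulty is that agent $i$ may influence its own neighbors.

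The paper's proof sidesteps this coupling issue and is essentially one line. Since $m\le\ell-1$, every agent $\kappa\in\N$ (not just those in $\N_i\cup\{i\}$) satisfies $x_\kappa(m\delta t,\bf{X}_0;v)\in\R_\kappa([0,T-\delta t])$, so one may pick cells $S_{l_\kappa}^\kappa$ containing these points for all $\kappa\in\N\setminus(\N_i\cup\{i\})$, form the global configuration $\bf{l}=(l_1,\ldots,l_N)$ with ${\rm pr}_i(\bf{l})=\bf{l}_i$, and invoke Lemma~\ref{lemma:onestep:consistency}(i) directly. No single-agent rerun of the contradiction argument is needed.
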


\begin{proof}
Indeed, since $x_{\kappa}(m\delta t,\bf{X}_0;v)\in \R_{\kappa}([0,T-\delta t])$ for all $\kappa\in\N$, we may select cells $S_{l_{\kappa}}^{\kappa}\in\S_{\kappa}$, with  $x_{\kappa}(m\delta t,\bf{X}_0;v)\in S_{l_{\kappa}}^{\kappa}$ for all $\kappa\in\N\setminus(\N_i\cup\{i\})$. Then, the result follows from Lemma \ref{lemma:onestep:consistency} by considering the cell configuration $\bf{l}=(l_1,\ldots,l_N)$ with ${\rm pr}_i(\bf{l})=\bf{l}_i$ and $l_{\kappa}$,  $\kappa\in\N\setminus(\N_i\cup\{i\})$ as previously selected.
\end{proof}

\noindent Based on the result of Lemma \ref{lemma:onestep:consistency} we can show that consistent discrete sequences of all agents which project to a strongly well posed transition sequence for each agent, have always outgoing transitions. 

\begin{prop} \label{proposition:implementation:of:strwellposed}
Consider cell decompositions $\S_i=\{S_l^i\}_{l\in\I_i}$ of $\R_i([0,T])$, $i\in\N$, their product $\S$, a time step $\delta t<\tau$ with $T=\ell\delta t$, nonempty subsets $W_i$, $i\in\N$ of $\Rat{n}$ and assume that each $\S_i$ is compliant with  $\R_i([0,T-\delta t])$ and that the space-time discretization $\S-\delta t$ is well posed. Also, consider a sequence $\bf{l}^0\cdots\bf{l}^m$ of global cell configurations with $m\in\{0,\ldots,\ell\}$ such that ${\rm pr}_i(\bf{l}^0)\cdots{\rm pr}_i(\bf{l}^{m-1})l^m_i$ is a strongly well posed transition sequence of order $m$ for each $i\in\N$.

\noindent \textit{(i)} Then, there exists $v\in\U$ such that each component $x_i(\cdot,\bf{X}_0;v)$ of the solution of \eqref{single:integrator} satisfies $x_i(\kappa\delta t,\bf{X}_0;v)\in S^i_{l_i^{\kappa}}$, for all $\kappa\in\{0,\ldots,m\}$. 

\noindent \textit{(ii)} If in addition $m<\ell$, then it holds ${\rm Post}_i(l_i^m,{\rm pr}_i(\bf{l}^m))\ne\emptyset$ for all $i\in\N$.
\end{prop}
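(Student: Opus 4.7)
The plan is to prove Part (i) by induction on $m$, using Lemma \ref{lemma:onestep:consistency}(ii) at each step to extend the input, and then to obtain Part (ii) as an immediate application of Lemma \ref{lemma:onestep:consistency}(i) at the final time instant $m\delta t$.

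For the base case $m=0$ of Part (i), the claim is vacuous: by definition of a strongly well posed transition sequence of order $0$ we have $X_{i0}\in S^i_{l_i^0}$ for each $i\in\N$, and any $v\in\U$ trivially yields $x_i(0,\bf{X}_0;v)=X_{i0}\in S^i_{l_i^0}$. For the inductive step, assume we have constructed $v^k\in\U$ such that $x_i(\kappa\delta t,\bf{X}_0;v^k)\in S^i_{l_i^\kappa}$ for all $\kappa\in\{0,\ldots,k\}$ and all $i\in\N$, with $k<m$. Since each projected sequence is strongly well posed and $k+1\le m\le\ell$, Definition \ref{dfn:strongly:well:posed:seq} gives $l_i^k\overset{\bf{l}_i^k}{\longrightarrow_i}l_i^{k+1}$ for every $i\in\N$, hence $l_i^{k+1}\in{\rm Post}_i(l_i^k,{\rm pr}_i(\bf{l}^k))$. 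Because the inductive hypothesis delivers precisely the assumption $x_i(k\delta t,\bf{X}_0;v^k)\in S^i_{l_i^k}$ required by Lemma \ref{lemma:onestep:consistency}, Part (ii) of that lemma applied with the cell configuration $\bf{l}^k$ and the selected successor indices $\{l_i^{k+1}\}_{i\in\N}$ produces an input $v^{k+1}\in\U$ satisfying $v^{k+1}(t)=v^k(t)$ for all $t\in[0,k\delta t)$ and $x_i((k+1)\delta t,\bf{X}_0;v^{k+1})\in S^i_{l_i^{k+1}}$ for all $i\in\N$. Causality of the transition map then guarantees that $x_i(\kappa\delta t,\bf{X}_0;v^{k+1})=x_i(\kappa\delta t,\bf{X}_0;v^k)\in S^i_{l_i^\kappa}$ for all $\kappa\le k$, so $v^{k+1}$ satisfies the inductive claim up to index $k+1$. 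Setting $v:=v^m$ establishes Part (i).

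For Part (ii), when $m<\ell$ we apply Part (i) to obtain $v\in\U$ with $x_i(m\delta t,\bf{X}_0;v)\in S^i_{l_i^m}$ for all $i\in\N$. Since $m\in\{0,\ldots,\ell-1\}$, Lemma \ref{lemma:onestep:consistency}(i) applied with the cell configuration $\bf{l}^m$ yields ${\rm Post}_i(l_i^m,{\rm pr}_i(\bf{l}^m))\ne\emptyset$ for every $i\in\N$, as desired.

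The only substantive point in this argument is the bookkeeping needed to ensure that the successive inputs $v^0,v^1,\ldots,v^m$ are compatible on the earlier intervals $[0,\kappa\delta t)$; this is handled transparently by the clause in Lemma \ref{lemma:onestep:consistency}(ii) stating that the updated input agrees with the previous one on $[0,m\delta t)$, combined with the strict causality property of $\varphi(\cdot)$ recalled in the preliminaries. Beyond this, the proposition is essentially a concatenation argument, so I expect no further obstacles.
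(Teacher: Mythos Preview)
Your proof is correct and follows essentially the same approach as the paper: an induction on $m$ using Lemma~\ref{lemma:onestep:consistency}(ii) to extend the input at each step, followed by an application of Lemma~\ref{lemma:onestep:consistency}(i) for Part~(ii). You are in fact slightly more careful than the paper in spelling out why the extended input $v^{k+1}$ still visits the correct cells at earlier times $\kappa\delta t\le k\delta t$, invoking strict causality explicitly where the paper leaves this implicit.
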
   

\begin{proof}[Proof of (i)]
Notice first that by the definition of a strongly well posed transition sequence it holds that $X_{i0}\in S_{l_i^0}^i$ for all $i\in\N$. The result is shown by induction for $\kappa=0,\ldots,m$ and specifically, by proving the following Induction Hypothesis:

\noindent \textit{Induction Hypothesis.} For each $\kappa=0,\ldots,m$ there exists an input $v\in\U$ such that each component $x_i(\cdot,\bf{X}_0;v)$ of the solution of \eqref{single:integrator} satisfies $x_i(\kappa'\delta t,\bf{X}_0;v)\in S^i_{l_i^{\kappa'}}$, for all $\kappa'\in\{0,\ldots,\kappa\}$.

\noindent Then, the result is a direct consequence of the Induction Hypothesis with $\kappa=m$. We next proceed with the proof of the Induction Hypothesis. For $\kappa=0$, the result is a direct consequence of the fact that $X_{i0}=x_i(0,\bf{X}_0;v)\in S_{l_i^0}^i$ for any $v\in\U$. For the general case, assume that the Induction Hypothesis is valid for certain $\kappa\in\{0,\ldots,m-1\}$, implying that $x_i(\kappa\delta t,\bf{X}_0;v)\in S^i_{l_i^{\kappa}}$ for all $i\in\N$. From the latter property, and since  ${\rm pr}_i(\bf{l}^0)\cdots{\rm pr}_i(\bf{l}^{m-1})l^m_i$ is strongly well posed, implying that $l_i^{\kappa+1}\in {\rm Post}_i(l_i^{\kappa},{\rm pr}_i(\bf{l}^{\kappa}))$, it follows from Lemma~\ref{lemma:onestep:consistency}(ii) applied with $m=\kappa$ that there exists $u\in\U$ with $u(t)=v(t)$ for all $t\in[0,\kappa\delta t)$ such that $x_i((\kappa+1)\delta t,\bf{X}_0;u)\in S_{l_i'}^i$ for all $i\in\N$. The latter implies that the Induction Hypothesis is valid for $\kappa+1$ with $v=u$. 

\noindent \textit{Proof of (ii).} The proof of Part (ii) is a direct consequence of the result of Part (i) with $\kappa=m$ and Lemma \ref{lemma:onestep:consistency}(i).
\end{proof}

Based on Proposition \ref{proposition:implementation:of:strwellposed} we can derive the desired properties of the product transition system corresponding to the space-time discretization, which will be defined recursively. In particular, given the product $\bfI=\I_1\times\cdots\times\I_N$ of the cell indices corresponding to the decompositions of the sets $\R_i([0,T])$, $i\in\N$, we define the operator $\P:\bfI\to 2^{\bfI}$ as  
\begin{equation} \label{post:operator:def}
\P(\bf{l}):={\rm Post}_1(l_1;{\rm pr}_1(\bf{l}))\times\cdots\times {\rm Post}_N(l_N;{\rm pr}_N(\bf{l})), \bf{l}\in\bfI,
\end{equation}

\noindent where ${\rm Post}_i(\cdot;\cdot)$, $i\in\N$ are the post operators for the agent's individual transition systems. We also recursively define the operators $\P^{\kappa}:2^{\bfI}\to  2^{\bfI}$, $\kappa\in\mathbb{N}\cup\{0\}$, given for each $\I\subset\bfI$ as
\begin{align*}
\P^0(\I) & :=\I; \\
\P^{\kappa}(\I) & :=\P(\P^{\kappa-1}(\I)),\kappa\ge 1. 
\end{align*}

\noindent From this definition it follows directly that for any $\kappa\ge 1$ and $\I\subset\bfI$ it holds
\begin{equation} \label{post:operator:property}
\bf{l}'\in \P^{\kappa}(\I) \iff \exists \bf{l}\in\P^{\kappa-1}(\I)\;\textup{such that}\; \bf{l}'\in \P(\I).
\end{equation} 

\noindent We next provide the definition of the product transition system.

\begin{dfn}\label{product:TS}
\textit{(i)} Consider cell decompositions $\S_i=\{S_l^i\}_{l\in\I_i}$ of $\R_i([0,T])$, $i\in\N$, their product decomposition $\S$, a time step $\delta t<\tau$ with $T=\ell\delta t$, nonempty subsets $W_i$, $i\in\N$ of $\Rat{n}$ and assume that each $\S_i$ is compliant with  $\R_i([0,T-\delta t])$. Also, consider for each agent $i\in\N$ its individual transition system $TS_i$ as provided by Definition \ref{individual:ts}. The product transition system $TS^{\P}:=TS_1\otimes\cdots\otimes TS_N$ is the transition system $(Q,Q_0,{\rm Act},\longrightarrow)$ with

\noindent \textbullet \; State set $Q:=\bfI=\I_1\times\cdots\times \I_N$  (the indices of the product cell decomposition);

\noindent \textbullet \; Initial state set $Q_0:=Q_{10}\times\cdots\times Q_{N0}$, $Q_{0i}:=\{l_i\in\I_i:X_{i0}\in S_{l_i}^i\}$, $i\in\N$;

\noindent \textbullet \; Actions $Act:=\{*\}$;

\noindent \textbullet \; Transition relation $\longrightarrow\subset Q\times Act\times Q$ defined as follows. For any $\bf{l},\bf{l}'\in Q$, $\bf{l}\overset{*}{\longrightarrow}\bf{l}'$, iff there exists $m\in\{0,\ldots,\ell-1\}$ such that $\bf{l}\in \P^{m}(Q_0)$ and  $\bf{l}'\in \P(\bf{l})$.

\noindent \textit{(ii)} A path of length $m\in\{0,\ldots\ell\}$ originating from $\bf{l}^0 $ in $TS^{\P}$, is a finite sequence of states $\bf{l}^0 \bf{l}^1\cdots\bf{l}^{m}$ such that  $\bf{l}^0\in Q_0$ and $ \bf{l}^{\kappa-1}\overset{*}{\longrightarrow}\bf{l}^{\kappa}$ for all $\kappa\in\{1,\ldots,m\}$ (when $m\ne 0$).
\end{dfn}

We will show in the sequel that for well posed discretizations the sets $\P^{m}(Q_0)$, $m\in\{0,\ldots,\ell\}$ in Definition \ref{product:TS} are always nonempty and that there exists an outgoing transition in the product transition system from any  $\bf{l}\in \P^{m}(Q_0)$,  $m\in\{0,\ldots,\ell-1\}$. This property is a corollary of the following auxiliary results.

\begin{lemma} \label{lemma:path:in:Pk}
For any path $\bf{l}^0 \bf{l}^1\cdots\bf{l}^{m}$ of length $m\in\{0,\ldots,\ell\}$ originating from $\bf{l}^0$ in $TS^{\P}$ it holds $\bf{l}^{\kappa}\in\P^{\kappa}(Q_0)$ for all $\kappa\in\{0,\ldots,m\}$. 
\end{lemma}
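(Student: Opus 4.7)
The statement is well suited to a straightforward induction on the path index $\kappa\in\{0,\ldots,m\}$, since the recursive definition of $\P^{\kappa}$ mirrors the one-step structure of a path in $TS^{\P}$. My plan is to set up the induction directly from Definition \ref{product:TS} and the characterization \eqref{post:operator:property}.

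For the base case $\kappa=0$, I just use the path definition: $\bf{l}^0\in Q_0=\P^0(Q_0)$, so there is nothing to prove. For the inductive step, assume $\bf{l}^{\kappa}\in\P^{\kappa}(Q_0)$ for some $\kappa<m$. Since $\bf{l}^0\bf{l}^1\cdots\bf{l}^m$ is a path, we have $\bf{l}^{\kappa}\overset{*}{\longrightarrow}\bf{l}^{\kappa+1}$, so by the transition relation in Definition \ref{product:TS}(i) there exists some $m'\in\{0,\ldots,\ell-1\}$ with $\bf{l}^{\kappa}\in\P^{m'}(Q_0)$ and crucially $\bf{l}^{\kappa+1}\in\P(\bf{l}^{\kappa})$. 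Applying \eqref{post:operator:property} with $\I=Q_0$ and the witness $\bf{l}=\bf{l}^{\kappa}\in\P^{\kappa}(Q_0)$ (given by the inductive hypothesis), we conclude $\bf{l}^{\kappa+1}\in\P^{\kappa+1}(Q_0)$, completing the induction.

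There is no real obstacle here: the only point worth stressing is that although the definition of the transition relation only guarantees membership of $\bf{l}^{\kappa}$ in \emph{some} $\P^{m'}(Q_0)$ (not necessarily the one with $m'=\kappa$), the inductive hypothesis pins down the correct index, and it is this combined with $\bf{l}^{\kappa+1}\in\P(\bf{l}^{\kappa})$ that feeds directly into \eqref{post:operator:property}. Accordingly the argument is essentially bookkeeping, and the conclusion follows after at most a few lines.
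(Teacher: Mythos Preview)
Your proof is correct and follows essentially the same approach as the paper: both argue by induction, using that $\bf{l}^{\kappa}\overset{*}{\longrightarrow}\bf{l}^{\kappa+1}$ forces $\bf{l}^{\kappa+1}\in\P(\bf{l}^{\kappa})$ and then combining this with the inductive hypothesis $\bf{l}^{\kappa}\in\P^{\kappa}(Q_0)$ to get $\bf{l}^{\kappa+1}\in\P^{\kappa+1}(Q_0)$. The paper inducts on the path length $m$ while you induct on the index $\kappa$ within a fixed path, and the paper phrases the key step via the inclusion $\P(\bf{l}^{\kappa})\subset\P(\P^{\kappa}(Q_0))=\P^{\kappa+1}(Q_0)$ rather than invoking \eqref{post:operator:property}, but these are cosmetic differences.
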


\begin{proof}
The proof is carried out by induction on the length of the path and is based on the definitions of the operator $\P(\cdot)$ and the transitions in $TS^{\P}$. Notice that for $m=0$ it holds $\bf{l}^0\in\P^0(Q_0)=Q_0$ by the definition of a path. Assume now that the lemma is valid for certain $m\in\{0,\ldots,\ell-1\}$, i.e., for any path $\bf{l}^0 \bf{l}^1\cdots\bf{l}^{m}$ of length $m$ originating from $\bf{l}^0$ it holds $\bf{l}^{\kappa}\in\P^{\kappa}(Q_0)$ for all $\kappa\in\{0,\ldots,m\}$. In order to prove the lemma for $m=m+1$ it suffices to show that for any path $\bf{l}^0 \bf{l}^1\cdots\bf{l}^{m+1}$ of length $m+1$ originating from $\bf{l}^0$ it holds $\bf{l}^{\kappa}\in\P^{\kappa}(Q_0)$ for all $\kappa\in\{0,\ldots,m+1\}$. By invoking validity of the lemma for $m$ and the fact that $\bf{l}^0 \bf{l}^1\cdots\bf{l}^{m}$ is a path of length $m$, it suffices to show that $\bf{l}^{m+1}\in\P^{m+1}(Q_0)$. Indeed, since $\bf{l}^m \overset{*}{\longrightarrow}\bf{l}^{m+1}$, it holds that $\bf{l}^{m+1}\in  \P(\bf{l}^m)$. Since by induction $\bf{l}^m\in\P^m(Q_0)$, we obtain that $\P(\bf{l}^m)\subset\P(\P^
m(Q_0))=\P^{m+1}(Q_0)$ and thus, that $\bf{l}^{m+1}\in \P^{m+1}(Q_0)$, which completes the proof.
\end{proof}

\begin{prop} \label{propostion:Post:nonempty}
\textit{(i)} Let $m\in\{0,\ldots,\ell\}$  and assume that $\bf{l}^m\in  \P^m(Q_0)\ne\emptyset$. Then, there exists a path $\bf{l}^0 \bf{l}^1\cdots\bf{l}^{m}$ of length $m$ from $\bf{l}^0$ to $\bf{l}^m$ in $TS^{\P}$, such that $\bf{l}^{\kappa}\in\P^{\kappa}(Q_0)$ for all $\kappa\in\{0,\ldots,m\} $ and ${\rm pr}_i(\bf{l}^0)\cdots{\rm pr}_i(\bf{l}^{m-1})l^m_i$ is a strongly well posed transition sequence of order $m$ for each $i\in\N$ (see Definition \ref{dfn:strongly:well:posed:seq}).

\noindent \textit{(ii)} In addition, if the space-time discretization $\S-\delta t$ is well posed, then for each  $m\in\{0,\ldots,\ell\}$  it holds that $ \P^m(Q_0)\ne\emptyset$.
\end{prop}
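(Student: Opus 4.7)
The plan is to prove both parts together by induction on $m$, with part (i) feeding directly into the inductive step of part (ii).

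For part (i), I would induct on $m\in\{0,\ldots,\ell\}$. The base case $m=0$ is immediate: the length-zero path consisting of just $\bf{l}^0$, with $\bf{l}^0\in Q_0=\P^0(Q_0)$, is trivially strongly well posed of order $0$ for each agent because $X_{i0}\in S_{l_i^0}^i$ by the very definition of $Q_0$. For the inductive step, given $\bf{l}^m\in\P^m(Q_0)$ with $m\ge 1$, I would apply \eqref{post:operator:property} to extract some $\bf{l}^{m-1}\in\P^{m-1}(Q_0)$ with $\bf{l}^m\in\P(\bf{l}^{m-1})$. The inductive hypothesis applied to $\bf{l}^{m-1}$ yields a path $\bf{l}^0\cdots\bf{l}^{m-1}$ with $\bf{l}^\kappa\in\P^\kappa(Q_0)$ for all $\kappa\le m-1$ and whose $i$-th projection is a strongly well posed sequence of order $m-1$. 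From $\bf{l}^m\in\P(\bf{l}^{m-1})=\prod_i{\rm Post}_i(l_i^{m-1};{\rm pr}_i(\bf{l}^{m-1}))$ I obtain $l_i^{m-1}\overset{{\rm pr}_i(\bf{l}^{m-1})}{\longrightarrow_i}l_i^m$ for every $i$, which simultaneously extends each agent's strongly well posed sequence to order $m$ and, because $\bf{l}^{m-1}\in\P^{m-1}(Q_0)$, validates the transition $\bf{l}^{m-1}\overset{*}{\longrightarrow}\bf{l}^m$ via Definition \ref{product:TS}. Appending $\bf{l}^m$ to the path completes the induction.

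For part (ii), I would again induct on $m$. The base case $m=0$ reduces to $Q_0\ne\emptyset$, which follows from the fact that $X_{i0}\in\R_i([0,T])$ lies in some cell of $\S_i$ for each $i\in\N$ by the definition of a cell decomposition. For the inductive step with $m<\ell$, assume $\P^m(Q_0)\ne\emptyset$ and pick $\bf{l}^m\in\P^m(Q_0)$. Part (i) produces a path $\bf{l}^0\cdots\bf{l}^m$ whose projections are strongly well posed transition sequences of order $m$ for every agent. Since $\S-\delta t$ is well posed and $m<\ell$, Proposition \ref{proposition:implementation:of:strwellposed}(ii) gives ${\rm Post}_i(l_i^m;{\rm pr}_i(\bf{l}^m))\ne\emptyset$ for all $i\in\N$. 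Thus $\P(\bf{l}^m)$, a Cartesian product of nonempty sets, is nonempty, and it is contained in $\P(\P^m(Q_0))=\P^{m+1}(Q_0)$, closing the induction.

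The main obstacle, such as it is, lies in the careful bookkeeping of part (i): the reconstructed path must simultaneously certify $\bf{l}^\kappa\in\P^\kappa(Q_0)$ at every level (so that the product-system transitions are valid per Definition \ref{product:TS}) and assemble coherent strongly well posed sequences \emph{for every agent in parallel}. This however amounts to a clean unpacking of the definition of $\P$ as a coordinatewise product of the individual ${\rm Post}_i$ operators together with the equivalence \eqref{post:operator:property}; no new analytic ingredient beyond Proposition \ref{proposition:implementation:of:strwellposed} is required.
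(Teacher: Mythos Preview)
Your proposal is correct and follows essentially the same approach as the paper. For part (i) the paper uses backward recursion via \eqref{post:operator:property} together with Lemma~\ref{lemma:path:in:Pk} rather than your forward induction, but this is only an organizational difference, and part (ii) is argued identically through Proposition~\ref{proposition:implementation:of:strwellposed}(ii).
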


\begin{proof}[Proof of (i)]
For the proof of Part (i), assume that without any loss of generality it holds $m\ne 0$ and notice that from \eqref{post:operator:property} and backward recursion we can select a path  $\bf{l}^0 \bf{l}^1\cdots\bf{l}^{m}$  of length $m$ from $\bf{l}^0$ to $\bf{l}^m$ in $TS^{\P}$, implying that $\bf{l}^0\in Q_0$. Then, it follows from Lemma~\ref{lemma:path:in:Pk} that $\bf{l}^{\kappa}\in\P^{\kappa}(Q_0)$ for all $\kappa\in\{0,\ldots,m\}$. In addition, for all $\kappa\in\{1,\ldots,m\}$ it follows from the fact that $\bf{l}^{\kappa-1}\overset{*}{\longrightarrow}\bf{l}^{\kappa}$ and the definition of $\P(\cdot)$ in \eqref{post:operator:def}, that $l_i^{\kappa-1}\overset{{\rm pr}_i(\bf{l}^{\kappa-1})}{\longrightarrow_i}l_i^{\kappa}$ for each $i\in\N$. Consequently, we get from Definition \ref{dfn:strongly:well:posed:seq} that ${\rm pr}_i(\bf{l}^0)\cdots{\rm pr}_i(\bf{l}^{m-1})l^m_i$ is a strongly well posed transition sequence of order $m$ for each $i\in\N$.  

\noindent \textit{Proof of (ii).} The proof of (ii) is carried out by induction on $m$ and exploits the results of Part~(i) and Proposition  \ref{proposition:implementation:of:strwellposed}(ii).
For $m=0$, the result follows directly from the fact that $\P^0(Q_0)=Q_0\ne\emptyset$. 
For the general case, assume that $\P^m(Q_0)\ne\emptyset$ for certain $m\in\{0,\ldots,\ell-1\}$ and let $\bf{l}^m\in \P^m(Q_0)$. Then, it follows from Part (i) that there exists $\bf{l}^0\bf{l}^1\cdots\bf{l}^m$ such that ${\rm pr}_i(\bf{l}^0)\cdots{\rm pr}_i(\bf{l}^{m-1})l^m_i$ is a strongly well posed transition sequence of order $m$ for each $i\in\N$. Hence, we obtain from Proposition~\ref{proposition:implementation:of:strwellposed}(ii) that ${\rm Post}_i(l_i^m;{\rm pr}_i(\bf{l}^m))\ne\emptyset$ for all $i\in\N$. Consequently, we deduce from \eqref{post:operator:def} that $\P(\bf{l}^m)\ne\emptyset$ and thus, since $\bf{l}^m\in \P^m(Q_0)$, that $\P^{m+1}(Q_0)=\P(\P^m(Q_0))\supset \P(\bf{l}^m)\ne\emptyset$.    
\end{proof}

\begin{corollary} \label{corollary:Post:nonempty}
Assume that the space-time discretization $\S-\delta t$ is well posed. Then, for each $m\in\{0,\ldots,\ell-1\}$ and $\bf{l}\in\P^m(Q_0)(\ne\emptyset)$ it holds ${\rm Post}(\bf{l})=\P(\bf{l})\ne \emptyset$.  
\end{corollary}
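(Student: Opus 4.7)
The plan is to combine the two parts of Proposition \ref{propostion:Post:nonempty} with Proposition \ref{proposition:implementation:of:strwellposed}(ii), and then verify the set equality ${\rm Post}(\bf{l})=\P(\bf{l})$ directly from the definition of the transitions of $TS^{\P}$.

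First, I would fix $m\in\{0,\ldots,\ell-1\}$ and $\bf{l}\in\P^m(Q_0)$, where non-emptiness of $\P^m(Q_0)$ is provided by Proposition \ref{propostion:Post:nonempty}(ii) (using well-posedness of $\S-\delta t$). Next, I would apply Proposition \ref{propostion:Post:nonempty}(i) to $\bf{l}^m:=\bf{l}$ in order to obtain a path $\bf{l}^0\bf{l}^1\cdots\bf{l}^{m}$ of length $m$ in $TS^{\P}$, ending at $\bf{l}$, such that for each $i\in\N$ the sequence ${\rm pr}_i(\bf{l}^0)\cdots{\rm pr}_i(\bf{l}^{m-1})l_i^m$ is a strongly well posed transition sequence of order $m$ in the sense of Definition \ref{dfn:strongly:well:posed:seq}. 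Since $m\le\ell-1<\ell$, Proposition \ref{proposition:implementation:of:strwellposed}(ii) immediately yields ${\rm Post}_i(l_i^m,{\rm pr}_i(\bf{l}^m))\ne\emptyset$ for every $i\in\N$. By the definition \eqref{post:operator:def} of the operator $\P(\cdot)$, this gives
\[
\P(\bf{l})={\rm Post}_1(l_1;{\rm pr}_1(\bf{l}))\times\cdots\times {\rm Post}_N(l_N;{\rm pr}_N(\bf{l}))\ne\emptyset,
\]
as a Cartesian product of nonempty sets.

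It remains to establish ${\rm Post}(\bf{l})=\P(\bf{l})$. The inclusion ${\rm Post}(\bf{l})\subset\P(\bf{l})$ is immediate from Definition \ref{product:TS}(i), since any $\bf{l}'$ with $\bf{l}\overset{*}{\longrightarrow}\bf{l}'$ must in particular satisfy $\bf{l}'\in\P(\bf{l})$. For the reverse inclusion, pick any $\bf{l}'\in\P(\bf{l})$; then, because $\bf{l}\in\P^m(Q_0)$ with $m\in\{0,\ldots,\ell-1\}$, Definition \ref{product:TS}(i) is fulfilled verbatim with this value of $m$, so $\bf{l}\overset{*}{\longrightarrow}\bf{l}'$ and hence $\bf{l}'\in{\rm Post}(\bf{l})$.

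There is no real obstacle here: the two parts of Proposition \ref{propostion:Post:nonempty} together with Proposition \ref{proposition:implementation:of:strwellposed}(ii) are tailored exactly for this conclusion, and the identification of ${\rm Post}(\bf{l})$ with $\P(\bf{l})$ is a direct unfolding of Definition \ref{product:TS}(i). The only point worth stating carefully is the use of the hypothesis $m\le\ell-1$, which is needed both to invoke Proposition \ref{proposition:implementation:of:strwellposed}(ii) and to qualify $\bf{l}$ as the source of an outgoing $\overset{*}{\longrightarrow}$-transition in $TS^{\P}$.
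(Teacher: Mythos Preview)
Your proposal is correct and follows essentially the same route as the paper's proof: both invoke Proposition~\ref{propostion:Post:nonempty}(ii) for $\P^m(Q_0)\ne\emptyset$, then Proposition~\ref{propostion:Post:nonempty}(i) to obtain the strongly well posed transition sequences, then Proposition~\ref{proposition:implementation:of:strwellposed}(ii) for nonemptiness of each ${\rm Post}_i$, and finally verify the set equality ${\rm Post}(\bf{l})=\P(\bf{l})$ directly from Definition~\ref{product:TS}(i). The only cosmetic difference is that you check the two inclusions in the opposite order.
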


\begin{proof}
Let  $m\in\{0,\ldots,\ell-1\}$ and recall that by Proposition~\ref{propostion:Post:nonempty}(ii) it holds that $\P^m(Q_0)\ne\emptyset$. Then, given $\bf{l}\in\P^m(Q_0)$ it follows from Proposition~\ref{propostion:Post:nonempty}(i) that there exists  $\bf{l}^0\in Q_0$ and a path $\bf{l}^0 \bf{l}^1\cdots\bf{l}^m$ of length $m$ from $\bf{l}^0$ to $\bf{l}^m=\bf{l}$ in $TS^{\P}$, such that $\bf{l}^{\kappa}\in\P^{\kappa}(Q_0)$ for all $\kappa\in\{0,\ldots,m\}$ and ${\rm pr}_i(\bf{l}^0)\cdots{\rm pr}_i(\bf{l}^{m-1})l^m$ is a strongly well posed transition sequence of order $m$ for each $i\in\N$. Hence, we obtain from Proposition~\ref{proposition:implementation:of:strwellposed}(ii) that ${\rm Post}_i(l_i^m;{\rm pr}_i(\bf{l}^m))\ne\emptyset$ for all $i\in\N$, which by virtue of \eqref{post:operator:def} implies that $\P(\bf{l}^m)\ne\emptyset$. Also, since $\bf{l}^m\in\P^m(Q_0)$ we have for all $\bf{l}'\in\P(\bf{l}^m)$ that $\bf{l}^m \overset{*}{\longrightarrow}\bf{l}'$, implying that  $\bf{l}'\in{\rm Post}(\bf{l}^m)$ and hence, that  $\P(\bf{l}^m)\subset{\rm Post}(\bf{l}^m)$. Finally, from the definition of the transitions in $TS^{\P}$ we have that $\bf{l}'\in{\rm Post}(\bf{l}^m)$, or equivalently, that $\bf{l}^m \overset{*}{\longrightarrow}\bf{l}'$ only if $\bf{l}'\in\P(\bf{l}^m)$, implying that also ${\rm Post}(\bf{l}^m)\subset\P(\bf{l}^m)$. The proof is now complete.
\end{proof}

The proposition below constitutes our main result in this section and guarantees the existence of paths of length $m$ for any $m\in\{0,\ldots,\ell\}$ originating from certain $\bf{l}^0\in Q_0$ in $TS^{\P}$. Additionally, it is shown that any such path can be realized by a sampled trajectory of the continuous time system \eqref{single:integrator} initiated from $\bf{X}_{0}$ over the subinterval $[0,m\delta t]$ of the time horizon $[0,T]=[0,\ell\delta t]$. 

\begin{prop} \label{proposition:sampled:traj:consistency}
Consider cell decompositions $\S_i=\{S_l^i\}_{l\in\I_i}$ of $\R_i([0,T])$, $i\in\N$, their product $\S$, a time step $\delta t<\tau$ with $T=\ell\delta t$, nonempty subsets $W_i$, $i\in\N$ of $\Rat{n}$ and assume that each $\S_i$ is compliant with  $\R_i([0,T-\delta t])$ and that the space time discretization $\S-\delta t$ is well posed. Then, the following hold.

\noindent \textit{(i)} For any $m\in\{0,\ldots,\ell\}$ there exists a path $\bf{l}^0 \bf{l}^1\cdots\bf{l}^{m}$ of length $m$ originating from $\bf{l}^0$ in the product transition system $TS^{\P}$.

\noindent \textit{(ii)} For any path $\bf{l}^0 \bf{l}^1\cdots\bf{l}^{m}$ of length $m$ originating from $\bf{l}^0$ in $TS^{\P}$, there exists an input $v\in\mathcal{U}$ such that each component $x_i(\cdot,\bf{X}_0;v)$ of the solution of \eqref{single:integrator} satisfies $x_i(\kappa\delta t,\bf{X}_0;v)\in S^i_{l_i^{\kappa}}$, for all $\kappa\in\{0,\ldots,m\}$.
\end{prop}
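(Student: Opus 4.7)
The plan is to assemble the result by invoking the machinery developed in the preceding lemmas and propositions; no new technical ingredient is required.

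For Part~(i), I would first appeal to Proposition~\ref{propostion:Post:nonempty}(ii), which guarantees that $\P^m(Q_0)$ is nonempty for every $m \in \{0,\ldots,\ell\}$ whenever the discretization is well posed. Picking any $\bf{l}^m \in \P^m(Q_0)$ and feeding it into Proposition~\ref{propostion:Post:nonempty}(i) immediately produces a path $\bf{l}^0 \bf{l}^1 \cdots \bf{l}^m$ of length $m$ from some $\bf{l}^0 \in Q_0$ to $\bf{l}^m$ in $TS^{\P}$. The degenerate case $m=0$ is handled separately by taking $\bf{l}^0 \in Q_0 = \P^0(Q_0)$, which is nonempty because each $X_{i0}$ lies in some cell of $\S_i$.

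For Part~(ii), given an arbitrary path $\bf{l}^0 \bf{l}^1 \cdots \bf{l}^m$ originating from $\bf{l}^0$ in $TS^{\P}$, my plan is to first verify that its projection to each agent is a strongly well posed transition sequence in the sense of Definition~\ref{dfn:strongly:well:posed:seq}, and then apply Proposition~\ref{proposition:implementation:of:strwellposed}(i). The required input property $X_{i0} \in S_{l_i^0}^i$ for all $i\in\N$ is immediate from $\bf{l}^0 \in Q_0$. For the transitions, each $\bf{l}^{\kappa-1} \overset{*}{\longrightarrow} \bf{l}^{\kappa}$ forces $\bf{l}^{\kappa} \in \P(\bf{l}^{\kappa-1})$ by the definition of transitions in $TS^{\P}$, and hence, by the componentwise definition \eqref{post:operator:def} of $\P$, that $l_i^{\kappa} \in {\rm Post}_i(l_i^{\kappa-1};{\rm pr}_i(\bf{l}^{\kappa-1}))$, i.e., $l_i^{\kappa-1} \overset{{\rm pr}_i(\bf{l}^{\kappa-1})}{\longrightarrow_i} l_i^{\kappa}$ for every $i \in \N$ and every $\kappa \in \{1,\ldots,m\}$. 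This is exactly the requirement for ${\rm pr}_i(\bf{l}^0)\cdots {\rm pr}_i(\bf{l}^{m-1}) l_i^m$ to be strongly well posed, so Proposition~\ref{proposition:implementation:of:strwellposed}(i) then delivers the input $v\in\U$ with the claimed sampled-trajectory property.

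I do not anticipate any real obstacle; the proof amounts to a careful unwinding of the definition of the product transition system and a reduction to results already in hand. The only care needed is to handle $m=0$ trivially in both parts (for Part~(ii), the conclusion reduces to $x_i(0,\bf{X}_0;v)=X_{i0}\in S_{l_i^0}^i$ for any $v\in\U$), which I would note in one line.
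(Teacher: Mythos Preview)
Your proposal is correct and follows essentially the same route as the paper. The only cosmetic difference is in Part~(i): the paper cites Corollary~\ref{corollary:Post:nonempty} (building the path forward from $Q_0$ via nonemptiness of ${\rm Post}$), whereas you invoke Proposition~\ref{propostion:Post:nonempty}(ii) and then (i) (picking $\bf{l}^m\in\P^m(Q_0)$ and pulling back a path), but these are equivalent and rest on the same ingredients; Part~(ii) matches the paper's argument verbatim.
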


\begin{proof}[Proof of (i)]
The proof of Part (i) is a direct consequence of Corollary \ref{corollary:Post:nonempty}.

\noindent \textit{Proof of (ii).} Let $\bf{l}^0 \bf{l}^1\cdots\bf{l}^{m}$  be a path of length $m$ originating from $\bf{l}^0$ and assume that without any loss of generality it holds $m\ne 0$. Then, it follows from the definition of the path that $\bf{l}^{\kappa-1}\overset{*}{\longrightarrow}\bf{l}^{\kappa}$ for all $\kappa\in\{1,\ldots,m\}$ and thus, that $\bf{l}^{\kappa}\in\P(\bf{l}^{\kappa-1})$ for all $\kappa\in\{1,\ldots,m\}$, implying that $l_i^{\kappa-1}\overset{{\rm pr}_i(\bf{l}^{\kappa-1})}{\longrightarrow_i}l_i^{\kappa}$ for each $\kappa\in\{1,\ldots,m\}$ and $i\in\N$. Hence, by recalling that $\bf{l}^0\in Q_0$, it follows that ${\rm pr}_i(\bf{l}^0)\cdots{\rm pr}_i(\bf{l}^{m-1})l^m_i$ is a strongly well posed transition sequence of order $m$ for each $i\in\N$, and we  
deduce from Proposition \ref{proposition:implementation:of:strwellposed}(i) that there exists an input $v\in\mathcal{U}$ such that each component $x_i(\cdot,\bf{X}_0;v)$ of the solution of \eqref{single:integrator} satisfies $x_i(\kappa\delta t,\bf{X}_0;v)\in S^i_{l_i^{\kappa}}$, for all $\kappa\in\{0,\ldots,m\}$. The proof is now complete. 
\end{proof}

\section{Design of the Hybrid Control Laws}

Consider again system  \eqref{single:integrator}. According to Definition \ref{well:posed:discretization}, establishment of a well posed discretization is based on the design of appropriate feedback laws which guarantee well posed transitions for all agents and their possible cell configurations, based on the auxiliary system with disturbances \eqref{system:disturbances}. We proceed by defining the control laws that are exploited in order to derive well posed discretizations. Consider for each agent $i$ a cell decomposition $\{S_{l}^i\}_{l\in\I_i}$ of its reachable set $\R_i([0,T])$ and a time step $\delta t$. We define the diameter $d_{\max}(i)$ of each cell decomposition $\{S_l^i\}_{l\in\I_i}$ as
\begin{align} 
d_{\max}(i):=\inf\{R>0:\forall l\in\I_i,\exists x\in S_l^i,S_l^i\subset B(x;\tfrac{R}{2})\}  \label{dmax:dfn}
\end{align}

\noindent and select a reference point $x_{l_i,G}$ for every cell $S_{l_i} ^i$, satisfying 
\begin{equation} \label{reference:points}
|x_{l_i,G}-x|\le\frac{d_{\max}(i)}{2},\forall x\in S_{l_i}^i,l_i\in\I_i,i\in\N.
\end{equation}

\noindent  For each agent $i$ and cell configuration $\bf{l}_i$ of $i$, we define the family of feedback laws  $k_{i,\bf{l}_i}:[0,\infty)\times \Rat{(N_i+1)n}\to\Rat{n}$ parameterized by $x_{i0}\in S_{l_i}^i$ and $w_i\in W_i$ as $k_{i,\bf{l}_i}(t,x_i,\bf{x}_j;x_{i0},w_i):=k_{i,\bf{l}_i,1}(t,x_i,\bf{x}_j)+ k_{i,\bf{l}_i,2}(x_{i0})+k_{i,\bf{l}_i,3}(w_i)$, where
\begin{align}
W_i:= & B(v_{\max}(i))\subset\Rat{n}, \label{set:W} \\
k_{i,\bf{l}_i,1}(t,x_{i},\bf{x}_j):= & g_i(\chi_i(t),\bf{x}_{l_j,G})-g_i(x_i,\bf{x}_j), \label{feedback:ki1} \\
k_{i,\bf{l}_i,2}(x_{i0}):= & \frac{1}{\delta t}(x_{l_i,G}-x_{i0}), \label{feedback:ki2} \\
k_{i,\bf{l}_i,3}(w_i):= & \lambda(i)w_i,\label{feedback:ki3:plan} \\
t\in [0,\infty),(x_i,\bf{x}_j)\in & \Rat{(N_i+1)n},x_{i0}\in S_{l_i}^i,w_i\in W_i. \nonumber
\end{align}

\noindent The function  $\chi_i(\cdot)$ in \eqref{feedback:ki1} is defined for all $t\ge 0$ through the solution of the initial value problem 
\begin{equation} \label{reference:trajectory}
\dot{\chi}_i=g_i(\chi_i,\bf{x}_{l_j,G}), \chi_i(0)=x_{l_i,G} 
\end{equation}

\noindent with $g_i(\cdot)$ as given in \eqref{function:g}. Recall that since $g_i(\cdot)$ is globally Lipschitz, the trajectory $\chi_i(\cdot)$ is defined for all $t\ge 0$. We also note that the feedback laws $k_{i,\bf{l}_i}(\cdot)$ depend on the cell of agent $i$ and specifically on its cell configuration $\bf{l}_i$, through the reference points $\bf{x}_{l_j,G}$ and  $x_{l_i,G}$ in \eqref{feedback:ki1} and \eqref{feedback:ki2}, and the trajectory $\chi_i(\cdot)$ in \eqref{feedback:ki1} as provided by \eqref{reference:trajectory}. The parameter $\lambda(i)$ stands for the part of the free input that can be further exploited for motion planning. In particular, for each $w_i\in W$ in \eqref{set:W}, the vector $\lambda(i) w_i$ provides the ``velocity" of a motion that we superpose to the reference trajectory $\chi_i(\cdot)$ of agent $i$ over $[0,\delta t]$. The latter allows the agent to reach all points inside a ball with center the position of the reference trajectory at time $\delta t$ by following the curve $\bar{x}_i(t):=\chi_i(t)+\lambda(i)w_it$, as depicted in Fig. \ref{fig:controllers} below. This ball has radius
\begin{equation} \label{distance:r}
r_i:=\int_0^{\delta t}\lambda(i)dsv_{\max}(i)=\lambda(i)\delta tv_{\max}(i),
\end{equation}

\noindent namely, the distance that the agent can cross in  time $\delta t$ by exploiting $k_{i,\bf{l}_i,3}(\cdot)$, which corresponds to the part of the free input that is selected  for reachability purposes. Hence, it is possible to perform a well posed transition to any cell which has a nonempty intersection with $B(\chi_i(\delta t);r_i)$.

\begin{figure}[H]
\begin{center}

\begin{tikzpicture}[scale=1]

\draw[dashed, color=red,thick] (3,0.5) circle (1cm);

\draw[color=gray] (0.1,0.8) -- (0.2,1.4) -- (1,1.5) -- (1.8,0.8);
\draw[color=gray] (1,1.5) -- (2.1,1.7);
\draw[color=gray] (0.3,-0.7) -- (0.8,0.1) -- (1.8,0.8)-- (1.6,-0.5) -- (0.3,-0.7);
\draw[color=gray] (0.1,0.8) -- (1.8,0.8);
\draw[color=green!50!black,thick] (-0.3,-0.5) -- (0.3,-0.7) -- (0.8,0.1) -- (0.1,0.8)-- (-0.3,-0.5);
\draw[color=cyan!70!black,thick] (1.8,0.8) -- (2.8,-0.7)-- (1.6,-0.5) -- (1.8,0.8);
\draw[color=cyan!70!black,thick] (1.8,0.8)-- (2.8,-0.7)-- (3.3,0.3) -- (3,1) -- (1.8,0.8);
\draw[color=cyan!70!black,thick] (2.8,-0.7)-- (4,-0.6) -- (4.1,0.4) -- (3.3,0.3) ;
\draw[color=cyan!70!black,thick] (1.8,0.8) -- (2.1,1.7) -- (3.2,1.7);
\draw[color=cyan!70!black,thick] (3,1) -- (3.2,1.7) -- (4.2,1.2) -- (4.1,0.4);

\fill[black] (3,0.5) circle (2pt);
\fill[black] (3.3,-0.2) circle (2pt);

\draw[black, dashed] (0.2,0.05) circle (0.7566cm);


\coordinate [label=left:$\textcolor{green!50!black}{S_{l_i}^i}$] (A) at (-0.3,-0.5);
\coordinate [label=right:$\textcolor{cyan!70!black}{S_{l_i'}^i}$] (A) at (3,-1);

\draw[dashed,->,>=stealth'] (3.2,0.5) -- (4.5,0.5) node[right] {$\chi_{i}(\delta t)$};
\draw[dashed,->,>=stealth'] (3.5,-0.2) -- (4.5,-0.2) node[right] {$x_{i}(\delta t)=x$};

\draw[color=red,dashed,thick] (6,0.5) -- (6.5,0.5);
\coordinate [label=right:$B(\chi_{i}(\delta t);r_i)$] (A) at (6.5,0.5);

\draw[color=blue,dashed,thick] (6,-0.7) -- (6.5,-0.7);
\coordinate [label=right:$\chi_{i}(\delta t)+t\lambda(i)w_i$] (A) at (6.5,-0.7);


\draw[color=red,thick] (6,1.2) -- (6.5,1.2);
\coordinate [label=right:$x_{i}(t)$] (A) at (6.5,1.2);

\draw[color=olive!50!black,very thick,densely dotted] (6,1.9) -- (6.5,1.9);
\coordinate [label=right:$\bar{x}_{i}(t)$] (A) at (6.5,1.9);

\draw[color=blue,dashed,->,thick,>=stealth'] (3,0.5) -- (3.3,-0.2);

\draw[color=blue,->,thick,>=stealth'] (0.2,0.05)  .. controls (1.4,0.7) .. (3,0.5);
\draw[color=olive!50!black,->,very thick,>=stealth',densely dotted] (0.2,0.05)  .. controls (1.6,0.5) .. (3.3,-0.2);
\draw[color=red,->,thick,>=stealth'] (0.2,-0.6) .. controls (1.6,0.3) .. (3.3,-0.2);

\fill[black] (0.2,-0.6) circle (1.5pt) node[below] {$x_{i0}$};
\fill[black] (0.2,0.05) circle (2pt) node[below]{$x_{l_i,G}$};

\end{tikzpicture}
\vspace{-0.4cm}

\end{center}
\caption{Illustration of the reference trajectory and reachability capabilities of the control laws.} \label{fig:controllers}
\end{figure}
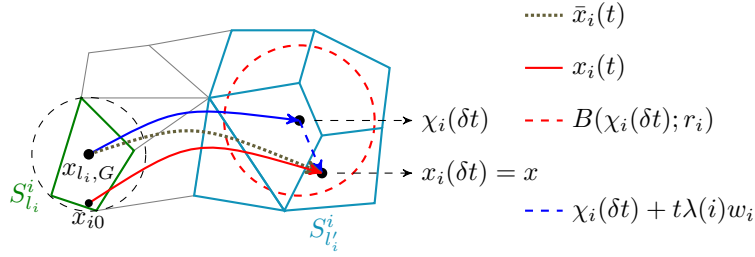

\section{Well Posed Space-Time Discretizations}

In this section, we exploit the controllers introduced in Section 4 to provide sufficient conditions for well posed space-time discretizations. By exploiting the result of Proposition \ref{proposition:sampled:traj:consistency} this  framework can be applied for motion planning, by specifying different transition possibilities for each agent through modifying its controller appropriately. As in the previous section, we consider the system \eqref{single:integrator}, cell decompositions $\mathcal{S}_i=\{S_l^i\}_{l\in\I_i}$ of the reachable sets $\R_i([0,T])$, $i\in\N$, a time step $\delta t$ and a selection of reference points $x_{l_i,G}$, $l\in\I_i$, $i\in\N$, as in \eqref{reference:points}.

Since the discrete space is updated after the end of the time horizon and the space discretization of each agent is affected by the local (in time) properties of its dynamics, it provides a convenient setting for the consideration of different diameters for the decomposition of each agent. Thus, we will additionally introduce certain design constraint relating the diameters of neighboring decompositions. In particular for each agent's neighbors we impose the restriction that the diameters of their decompositions satisfy  
\begin{equation} \label{diameters:restrictions}
d_{\max}(j)\le \mu(j,i) d_{\max}(i). 
\end{equation}

\noindent Note that for these restrictions to be meaningful we also need to impose the condition that along each cycle in the communication graph their product is greater or equal than one, i.e., 
\begin{equation} \label{diameters:restrictions:consistency}
\mu(i_0,i_1)\mu(i_1,i_2)\cdots\mu(i_{m-1},i_m)\ge 1,\;\textup{for all cycles} \;i_0i_1\cdots i_m\;{\rm with}\;i_0=i_m \;{\rm in}\; \mathcal{G},
\end{equation}

\noindent which is always satisfied if we select $\mu(j,i)=1$ for all $i\in\N$ and $j\in\N_i$. For the acceptable values of the discretizations, it is also convenient to define the following local network parameters for each agent.
\begin{align}
\mbf{\mu}(i):= & \left(\sum_{j\in\N_i}\mu(j,i)^2\right)^{\frac{1}{2}}, \\
\bf{M}(i):= &  \left(\sum_{j\in\N_i}(M(j)+v_{\max}(j))^2\right)^{\frac{1}{2}}.
\end{align}


\noindent For each $i\in\N$ and cell configuration $\bf{l}_i\in\bfI_i$ of $i$, consider the family of feedback laws given in \eqref{feedback:ki1}, \eqref{feedback:ki2}, \eqref{feedback:ki3:plan}, and parameterized by $x_{i0}\in S_{l_i}^i$ and $w_i\in W_i$. As in the previous section, $\chi_i(\cdot)$ is the reference trajectory generated by the initial value problem \eqref{reference:trajectory} and the parameter $\lambda(i)$ provides for each agent a measure of the control that is exploited for reachability purposes. We proceed by providing the desired sufficient conditions for well posed space-time discretizations and their transition capabilities.

\begin{thm}\label{discretizations:for:planning}
Consider cell decompositions $\mathcal{S}_i=\{S_l^i\}_{l\in\I_i}$ of the reachable sets $\R_i([0,T])$ with diameters $d_{\max}(i)$, their product $\S$, a time step $\delta t$, the constant $r_i$ defined in \eqref{distance:r}, the parameters $\lambda(i)\in(0,1)$ and assume that each $\S_i$ is compliant with $\R_i([0,T-\delta t])$. We also assume that $d_{\max}(i)$ and $\delta t$ satisfy \eqref{diameters:restrictions}, \eqref{diameters:restrictions:consistency}, $\ell \delta t=T$ for certain $\ell\in\mathbb{N}$ and the following additional restrictions:

\begin{align}
\delta t \in & \left(0,\frac{(1-\lambda(i))v_{\max}(i)}{L_1(i)\bf{M}(i)+L_2(i)\lambda(i)v_{\max}(i)}\right), \label{deltat:interval} \\
d_{\max} \in & \left(0,\min\left\lbrace \frac{2(1-\lambda(i))v_{\max}(i)\delta t}{1+(L_1(i)\mu(i)+L_2(i))\delta t},\right.\right. \nonumber \\
& \left.\left.\frac{2(1-\lambda(i))v_{\max}(i)\delta t-2(L_1(i)\bf{M}(i)+L_2(i)\lambda(i)v_{\max}(i)) \delta t^2}{1+L_1(i)\mbf{\mu}(i)\delta t} \right\rbrace\right), \label{dmax:interval}
\end{align}

\noindent with $L_1(i)$, $L_2(i)$ and $v_{\max}(i)$ as given in \eqref{dynamics:bound1}, \eqref{dynamics:bound2} and \eqref{input:bound}, respectively,  and $\mbf{\mu}(i)$, $\bf{M}(i)$ as defined above. Then, the space-time discretization is well posed for the multi-agent system \eqref{single:integrator}. In particular, the following hold. 

\noindent \textit{(i)} For each agent $i\in\N$, cell decomposition $\S_i'=\{S_{l}^i\}_{l\in\I_i'}$ of $\R_i^{c_i(\tau)}([0,T-\tau])$ with $\S_i'\supset\S_i$, $\I_i'\supset\I_i$ and compliant with $\R_i([0,T])$, and cell configuration $\bf{l}_i$ of $i$ with $S_{l_i}^i\subset\R_i([0,T-\delta t])$ it holds
\begin{align} 
 B(\chi_i(\delta t);r_i) & \subset \R_i^{c_i(\tau)}([0,T-\tau]), \label{reachball:inclusion}\\
l_i\overset{\bf{l}_i}{\longrightarrow}l_i'\;\textup{is well posed}\; \forall l_i' & \in \{l\in\I_i':S_l^i\cap B(\chi_i(\delta t);r_i)\ne\emptyset\}, \label{planning:condition}
\end{align}

\noindent with the reference trajectory $\chi_i(\cdot)$ as given by \eqref{reference:trajectory} and $r_i$ as defined in \eqref{distance:r}.

\noindent \textit{(ii)}  For each agent $i\in\N$, cell configuration $\bf{l}_i$ of $i$, integer $m\in\{0,\ldots,\ell-1\}$ and input $v\in\U$ such that each component $x_{\kappa}(\cdot,\bf{X}_0;v)$, $\kappa\in\N_i\cup\{i\}$ of the solution of \eqref{single:integrator} satisfies  $x_{\kappa}(m\delta t,\bf{X}_0;v)\in S_{l_{\kappa}} ^{\kappa}$, it holds ${\rm Post}(l_i,\bf{l}_i)\supset \{l\in\I_i':S_l^i\cap B(\chi_i(\delta t);r_i)\ne\emptyset\}$.
\end{thm}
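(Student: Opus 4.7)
The plan is to verify the Consistency Condition of Definition~\ref{conditionCC} with the hybrid controllers designed in Section~4, and to deduce Part~(ii) from Part~(i) via Corollary~\ref{corollary:individual:post}. The pivotal observation is that $k_{i,\bf{l}_i,1}$ is engineered to cancel the nonlinear drift of the auxiliary dynamics~\eqref{system:disturbances}: substituting $v_i=k_{i,\bf{l}_i,1}+k_{i,\bf{l}_i,2}+k_{i,\bf{l}_i,3}$ into $\dot{x}_i=g_i(x_i,\bf{d}_j)+v_i$ collapses it to $\dot{x}_i=g_i(\chi_i(t),\bf{x}_{l_j,G})+\tfrac{1}{\delta t}(x_{l_i,G}-x_{i0})+\lambda(i)w_i$, which is independent of both $x_i$ and the disturbances $\bf{d}_j$. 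Integrating and using \eqref{reference:trajectory} yields the closed-form expression
\begin{equation*}
x_i(t)=\chi_i(t)+\left(1-\tfrac{t}{\delta t}\right)(x_{i0}-x_{l_i,G})+\lambda(i)w_i t,\qquad t\in[0,\delta t].
\end{equation*}
In particular $x_i(\delta t)=\chi_i(\delta t)+\lambda(i)w_i\delta t$, so as $w_i$ varies in $W_i=B(v_{\max}(i))$, the endpoint $x_i(\delta t)$ sweeps all of $B(\chi_i(\delta t);r_i)$. Hence, given any $l_i'$ with $S_{l_i'}^i\cap B(\chi_i(\delta t);r_i)\ne\emptyset$, one picks $x_i'$ in this intersection and sets $w_i=(x_i'-\chi_i(\delta t))/(\lambda(i)\delta t)$, which delivers \eqref{xi:in:finalcell}.

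The remaining pieces of the Consistency Condition reduce to scalar inequalities driven by this closed form. For \eqref{xi:consistency:bounds}, $d(x_i(t),S_{l_i}^i)\le|x_i(t)-x_{i0}|\le|\chi_i(t)-x_{l_i,G}|+\tfrac{t}{\delta t}|x_{l_i,G}-x_{i0}|+\lambda(i)v_{\max}(i)t\le(M(i)+\lambda(i)v_{\max}(i))t+\tfrac{t\,d_{\max}(i)}{2\delta t}$, which is strictly below $(M(i)+v_{\max}(i))t$ as soon as $d_{\max}(i)<2(1-\lambda(i))v_{\max}(i)\delta t$, and this is implied by \eqref{dmax:interval}. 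For the inclusion \eqref{reachball:inclusion}, $x_{l_i,G}\in S_{l_i}^i\subset\R_i([0,T-\delta t])$ combined with \eqref{reachset:includedin:sigma:interval:inflated} applied with $\sigma=\tau-\delta t$ places $x_{l_i,G}$ within distance $c_i(\tau-\delta t)$ of $\R_i([0,T-\tau])$; combining with $|\chi_i(\delta t)-x_{l_i,G}|\le M(i)\delta t$ and $r_i=\lambda(i)v_{\max}(i)\delta t$, the triangle inequality yields a total distance bounded by $c_i(\tau)$, as required.

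The principal technical step is verifying the controller bound \eqref{controler:consistency}. From \eqref{dynamics:bound1}--\eqref{dynamics:bound2},
\begin{equation*}
|k_{i,\bf{l}_i,1}(t,x_i(t),\bf{d}_j(t))|\le L_2(i)|x_i(t)-\chi_i(t)|+L_1(i)|\bf{d}_j(t)-\bf{x}_{l_j,G}|,
\end{equation*}
and the closed form gives $|x_i(t)-\chi_i(t)|\le(1-t/\delta t)d_{\max}(i)/2+\lambda(i)v_{\max}(i)t$. Applying \eqref{disturbance:bounds} componentwise and combining via the Euclidean triangle inequality together with \eqref{diameters:restrictions} yields $|\bf{d}_j(t)-\bf{x}_{l_j,G}|\le\mbf{\mu}(i)d_{\max}(i)/2+\bf{M}(i)t$. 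Adding $|k_{i,\bf{l}_i,2}|\le d_{\max}(i)/(2\delta t)$ and $|k_{i,\bf{l}_i,3}|\le\lambda(i)v_{\max}(i)$ produces an upper bound on $|k_{i,\bf{l}_i}(t,\cdot)|$ that is affine in $t$; its supremum on $[0,\delta t]$ is attained at an endpoint, and the two endpoint inequalities $<v_{\max}(i)$ turn out to be precisely the first (at $t=0$) and second (at $t=\delta t$) branches of the minimum in \eqref{dmax:interval}, with \eqref{deltat:interval} guaranteeing positivity of the latter's numerator. Property~(P) is immediate because $g_i$ is globally Lipschitz. For Part~(ii), the hypotheses let us invoke Corollary~\ref{corollary:individual:post}, which identifies ${\rm Post}(l_i,\bf{l}_i)$ with the set of $l_i'$ for which $l_i\overset{\bf{l}_i}{\longrightarrow}l_i'$ is well posed, and Part~(i) shows this set contains every $l_i'$ with $S_{l_i'}^i\cap B(\chi_i(\delta t);r_i)\ne\emptyset$. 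The chief obstacle is the bookkeeping in the controller-bound estimate that reproduces \eqref{dmax:interval}; the rest is essentially algebraic manipulation of the closed-form trajectory.
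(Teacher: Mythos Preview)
Your approach is essentially the paper's: the same closed-form trajectory via drift cancellation, the same choice of $w_i$, the same affine-in-$t$ controller estimate checked at the endpoints, and the same appeal to Corollary~\ref{corollary:individual:post} for Part~(ii).

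One point you gloss over, which the paper handles explicitly as a separate step, is the domain check needed before you may invoke \eqref{dynamics:bound1}--\eqref{dynamics:bound2}. Those Lipschitz constants $L_1(i),L_2(i)$ are only asserted for $x_i,y_i\in\R_i^{c_i(\tau)}([0,T-\tau])$ and $\bf{x}_j,\bf{y}_j\in\R_{j_1}([0,T])\times\cdots\times\R_{j_{N_i}}([0,T])$; the extension $g_i$ is globally Lipschitz, but not necessarily with these particular constants. So before bounding $|k_{i,\bf{l}_i,1}(t,x_i(t),\bf{d}_j(t))|$ you must verify that $x_i(t)$ and $\chi_i(t)$ lie in $\R_i^{c_i(\tau)}([0,T-\tau])$ for all $t\in[0,\delta t]$. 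This follows by the same argument you gave for \eqref{reachball:inclusion}: your bound $|x_i(t)-x_{i0}|<(M(i)+v_{\max}(i))\delta t=c_i(\delta t)$ combined with $x_{i0}\in\R_i([0,T-\delta t])\subset\R_i^{c_i(\tau-\delta t)}([0,T-\tau])$ and \eqref{Rti:over:interval:inflaed:semigroup} gives $x_i(t)\in\R_i^{c_i(\tau)}([0,T-\tau])$, and similarly for $\chi_i(t)$. With this step inserted, your argument is complete and matches the paper's.
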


\begin{proof}[Proof of (i)]
For the proof, pick $i\in\N$, a cell decomposition  $\S_i'=\{S_{l}^i\}_{l\in\I_i'}$ of $\R_i^{c_i(\tau)}([0,T-\tau])$ with $\S_i'\supset\S_i$, $\I_i'\supset\I_i$ and compliant with $\R_i^{c_i(\tau)}([0,T-\tau])$, and a cell configuration $\bf{l}_i$ of $i$ with $S_{l_i}^i\subset\R_i([0,T-\delta t])$. Also consider the reference trajectory $\chi_i(\cdot)$ in \eqref{reference:trajectory} which is defined for all $t\ge 0$.

We first show that \eqref{reachball:inclusion} is satisfied. Indeed, let any $x\in B(\chi_i(\delta t);r_i)$. Then, it follows from \eqref{function:g} and \eqref{reference:trajectory} that 
\begin{equation} \label{reference:tr:bound}
|\chi_i(\delta t)-x_{l_i,G}|\le\int_0^{\delta t}|g_i(\chi_i(t),\bf{x}_{l_j,G})|dt\le M(i)\delta t.
\end{equation}

\noindent In addition, we have from \eqref{reachset:includedin:sigma:interval:inflated} applied with $\sigma=\tau-\delta t$ that 
\begin{equation} \label{inflation:of:tau:min:deltat}
\R_i^{c_i(\tau-\delta t)}([0,T-\tau])\supset \R_i([0,T-\delta t])
\end{equation}

\noindent and from the fact that $x_{l_i,G}\in S_{l_i}^i\subset \R_i([0,T-\delta t])$ that 
\begin{equation} \label{xig:in:inflation}
x_{l_i,G}\in \R_i^{c_i(\tau-\delta t)}([0,T-\tau]).
\end{equation}

\noindent Furthermore, since $x\in B(\chi_i(\delta t);r_i)$, we obtain from \eqref{distance:r} that $|\chi_i(\delta t)-x|\le\lambda(i)v_{\max}(i)\delta t$, which in conjunction with \eqref{reference:tr:bound}, \eqref{xig:in:inflation}, \eqref{constant:ci} and \eqref{Rti:over:interval:inflaed:semigroup} applied with $c=c_i(\tau-\delta t)$ and $\bar{c}=c_i(\delta t)$ implies that
\begin{align} \label{x:in:biger:reach}
x & \in  \R_i^{c_i(\tau-\delta t)}([0,T-\tau])+B(c_i(\delta t)) \nonumber \\
& = \R_i^{c_i(\tau-\delta t)+c_i(\delta t)}([0,T-\tau])= \R_i^{c_i(\tau)}([0,T-\tau]),
\end{align}

\noindent as desired.

For the derivation of \eqref{planning:condition}, namely, that each transition $l_i\overset{\bf{l}_i}{\longrightarrow}l_i'$, with $S_{l_i'}^i\cap B(\chi_i(\delta t);r_i)\ne\emptyset$ is well posed, it suffices to show that for each $x\in B(\chi_i(\delta t);r_i)$ and $l_i'\in\I_i'$ such that $x\in S_{l_i'}^i$, the transition $l_i\overset{\bf{l}_i}{\longrightarrow}l_i'$ is well posed. Thus, for each $x\in B(\chi_i(\delta t);r_i)$ and $l_i'\in\I_i'$ such that $x\in S_{l_i'}^i$, we need according to Definition \ref{well:posed:discretization}(i) to find a feedback law \eqref{feedback:for:i} satisfying Property (P) and a vector $w_i\in W_i$, in such a way that the Consistency Condition is fulfilled. Let $x\in B(\chi_i(\delta t);r_i)$ and define
\begin{equation}  \label{vector:wi}
w_i:= \frac{x-\chi_i(\delta t)}{\lambda(i)\delta t},
\end{equation}

\noindent with $\lambda(i)$ as in the statement of the theorem. Then, it follows from \eqref{distance:r} that $|w_i|\le \frac{r_i}{\lambda(i)\delta t}\le v_{\max}(i)$ and hence, by virtue of \eqref{set:W} that $w_i\in W_i$. We now select the feedback law $k_{i,\bf{l}_i}(\cdot)$ as given by \eqref{feedback:ki1}, \eqref{feedback:ki2}, \eqref{feedback:ki3:plan} and with $w_i$ as defined in \eqref{vector:wi}, and we will show that for all $l_i'\in\I_i'$ such that $x\in S_{l_i'}^i$ the Consistency Condition is satisfied. Notice first that $k_{i,\bf{l}_i}(\cdot)$ satisfies Property (P) . In order to show the Consistency Condition we pick  $x_{i0}\in S_{l_i}^i$, $x_i':=x$ with $x$ as selected above and prove that the solution $x_i(\cdot)$ of \eqref{system:disturbances} with $v_i=k_{i,\bf{l}_i}(t,x_i,\bf{d}_j;x_{i0},w_i)$ satisfies \eqref{xi:consistency:bounds}, \eqref{xi:in:finalcell} and \eqref{controler:consistency}, for any continuous $d_{j_1},\ldots,d_{j_{N_i}}:\RgeO\to\Rat{n}$ that satisfy \eqref{disturbance:bounds}. We break the subsequent proof in the following steps.

\noindent \textit{STEP 1:  Proof of \eqref{xi:consistency:bounds} and \eqref{xi:in:finalcell}.} By taking into account \eqref{system:disturbances} and \eqref{feedback:ki1}-\eqref{feedback:ki3:plan} we obtain for any continuous $d_{j_1},\ldots,d_{j_{N_i}}:\RgeO\to\Rat{n}$ the solution $x_i(\cdot)$ of \eqref{system:disturbances} with $v_i=k_{i,\bf{l}_i}$ as

\begin{align}
x_i(t) & =x_{i0}+\int_0^t(g_i(x_i(s),\bf{d}_j(s))+k_{i,\bf{l}_i}(s,x_{i}(s),\bf{d}_j(s);x_{i0},w_i))ds  \nonumber \\
& = x_{i0}+\int_0^t\left(g_i(\chi_i(s),\bf{x}_{l_j,G})+\frac{1}{\delta t}(x_{l_i,G}-x_{i0})+\lambda(i) w_i\right)ds  \nonumber \\
& = x_{i0}+ \frac{t}{\delta t}(x_{l_i,G}-x_{i0})+\lambda(i) w_it+\int_0^tg_i(\chi_i(s),\bf{x}_{l_j,G})ds,t\ge 0. \label{solution:expansion}
\end{align}

\noindent Hence, we deduce from \eqref{function:g}, \eqref{reference:points}, \eqref{solution:expansion} and the fact that from \eqref{dmax:interval} it holds $d_{\max}(i)< 2(1-\lambda(i))v_{\max}(i)\delta t$, that
\begin{align} \label{solution:vs:reftraj}
|x_i(t)-x_{i0}| \le & \left|\frac{t}{\delta t}(x_{i0}-x_{l_i,G})\right|+\lambda(i)|w_i|t+M(i)t \nonumber \\
\le & \frac{d_{\max}(i)t}{2\delta t}+(\lambda(i)v_{\max}(i)+M(i))t < \frac{2(1-\lambda(i))v_{\max}(i)\delta t}{2\delta t}t \nonumber \\
+ & (\lambda(i)v_{\max}(i)+M(i))t=(v_{\max}(i)+M(i))t,\forall t\in (0,\delta t].
\end{align}

\noindent which establishes validity of \eqref{xi:consistency:bounds}. Furthermore, we get from \eqref{solution:expansion} and \eqref{reference:trajectory} that 
\begin{align}
x_i(t) & = \frac{\delta t-t}{\delta t}(x_{i0}-x_{l_i,G})+\lambda(i)w_it+x_{l_i,G}+\int_0^tg_i(\chi_i(s),\bf{x}_{l_j,G})ds \nonumber \\
& = \frac{\delta t-t}{\delta t}(x_{i0}-x_{l_i,G})+\lambda(i)w_it+\chi_i(t),t\ge 0, \label{solution:expression}
\end{align}

\noindent which implies that $x_i(\delta t)=\chi_i(\delta t)+\delta t\lambda(i) w_i=x=x_i'$ and thus, \eqref{xi:in:finalcell} also holds. 

\noindent \textit{STEP 2: Proof of the fact that} 
\begin{equation} \label{time:bart:thm}
x_i(t)\in\R_i^{c_i(\tau)}([0,T-\tau]), \forall t\in[0,\delta t].
\end{equation}

\noindent Notice first that from \eqref{solution:vs:reftraj} it follows that  
\begin{equation} \label{solution:vs:ref:point}
|x_i(t)-x_{i0}|< (v_{\max}(i)+M(i))\delta t,\forall t\in[0,\delta t].
\end{equation}

\noindent In addition, we have that $x_{i0}\in  S_{l_i}^i\subset R_i([0,T-\delta t])$ and from \eqref{inflation:of:tau:min:deltat} that  $x_{i0}\in  R_i^{c_i(\tau-\delta t)}([0,T-\tau])$. Thus, we obtain from the latter, \eqref{solution:vs:ref:point} and the same arguments that were applied for the derivation of \eqref{x:in:biger:reach} that \eqref{time:bart:thm} is fulfilled.

\noindent \textit{STEP 3: Estimation of bounds on $k_{i,\bf{l}_i,1}(\cdot)$, $k_{i,\bf{l}_i,2}(\cdot)$ and $k_{i,\bf{l}_i,3}(\cdot)$ along the solution $x_i(\cdot)$ of \eqref{system:disturbances} with $v_i=k_{i,\bf{l}_i}$ and $d_{j_1},\ldots,d_{j_{N_i}}$ satisfying  \eqref{disturbance:bounds}.}  We first show that
\begin{align}
|k_{i,\bf{l}_i,1}(t,x_{i}(t),\bf{d}_j(t))| & \le L_{1}(i)\left(\mbf{\mu}(i)\frac{d_{\max}(i)}{2}+\bf{M}(i)t\right) \nonumber \\
& +L_{2}(i)\left(\frac{(\delta t-t)d_{\max}(i)}{2\delta t}+\lambda(i)v_{\max}(i)t\right), \forall t\in[0,\delta t]. \label{ki1:bound}
\end{align}

\noindent Indeed, notice that by virtue of \eqref{feedback:ki1} we have
\begin{equation}  \label{ki1:equiv}
k_{i,\bf{l}_i,1}(t,x_{i}(t),\bf{d}_j(t))=[g_i(\chi_i(t),\bf{x}_{l_j,G})-g_i(x_i(t),\bf{x}_{l_j,G})]+[g_i(x_i(t),\bf{x}_{l_j,G})-g_i(x_i(t),\bf{d}_j(t))].
\end{equation}

\noindent For the second difference on the right hand side of \eqref{ki1:equiv}, we obtain from \eqref{dynamics:bound1},  \eqref{disturbance:bounds}, \eqref{reference:points}, \eqref{diameters:restrictions},  \eqref{time:bart:thm} and the Cauchy Schwartz inequality that
\begin{align*}
& |g_i(x_i(t),\bf{x}_{l_j,G}) -  g_i(x_i(t),\bf{d}_j(t))| \le L_{1}(i)|(d_{j_{1}}(t)-x_{l_{j_1},G},\ldots,d_{j_{N_i}}(t)-x_{l_{j_N},G})| \\
& \le  L_{1}(i)\left(\sum_{\kappa\in\N_i}\left(\frac{d_{\max}(\kappa)}{2}+(M(\kappa)+v_{\max}(\kappa))t\right)^{2}\right)^{\frac{1}{2}} \\
& \le  L_{1}(i)\left(\sum_{\kappa\in\N_i}\left(\mu(\kappa,i)\frac{d_{\max}(i)}{2}+(M(\kappa)+v_{\max}(\kappa))t\right)^{2}\right)^{\frac{1}{2}} \\ 
& = L_{1}(i)\left(\frac{d_{\max}(i)^2}{4}\sum_{\kappa\in\N_i}\mu(\kappa,i)^2+2\frac{d_{\max}(i)}{2}t\sum_{\kappa\in\N_i}\mu(\kappa,i)(M(\kappa)+v_{\max}(\kappa))\right. \\
& \left.\hspace{20em} +t^2\sum_{\kappa\in\N_i}(M(\kappa)+v_{\max}(\kappa))^2\right)^{\frac{1}{2}}
\end{align*}

\begin{align*}
 &\le L_{1}(i)\left(\frac{d_{\max}(i)^2}{4}\sum_{\kappa\in\N_i}\mu(\kappa,i)^2+2\frac{d_{\max}(i)}{2}t\left(\sum_{\kappa\in\N_i}\mu(\kappa,i)^2\right)^{\frac{1}{2}}
\left(\sum_{\kappa\in\N_i}(M(\kappa)+v_{\max}(\kappa))^2\right)^{\frac{1}{2}}\right. \\
&\left.\hspace{26em} +t^2\sum_{\kappa\in\N_i}(M(\kappa)+v_{\max}(\kappa))^2\right)^{\frac{1}{2}} \\
& = L_{1}(i)\left(\left(\left(\sum_{\kappa\in\N_i}\mu(\kappa,i)^2\right)^{\frac{1}{2}}\frac{d_{\max}(i)}{2}+
\left(\sum_{\kappa\in\N_i}(M(\kappa)+v_{\max}(\kappa))^2\right)^{\frac{1}{2}}t\right)^2\right)^{\frac{1}{2}} \\
& = L_{1}(i)\left(\mbf{\mu}(i)\frac{d_{\max}(i)}{2}+\bf{M}(i)t\right),\forall t\in[0,\delta t].
\end{align*}

\noindent For the other difference in \eqref{ki1:equiv}, it follows from \eqref{dynamics:bound2}, \eqref{time:bart:thm}, \eqref{reference:points} and \eqref{solution:expression} that
\begin{align*}
|g_i(x_i(t),\bf{x}_{l_j,G})-g_i(\chi_i(t),\bf{x}_{l_j,G})| \le & L_2(i)\left|\left(\chi_i(t)+\left(\frac{\delta t-t}{\delta t}\right)(x_{i0}-x_{l_i,G})+\lambda(i) w_it\right)-\chi_i(t)\right| \\
\le & L_{2}(i)\left(\frac{(\delta t-t)d_{\max}(i)}{2\delta t}+\lambda(i)v_{\max}(i)t\right), \forall t\in[0,\delta t].
\end{align*}

\noindent Hence, it follows from the evaluated bounds on the differences of the right hand side of \eqref{ki1:equiv} that \eqref{ki1:bound} holds. Next, by recalling that $x_{l_i,G}$ satisfies \eqref{reference:points}, it follows directly from \eqref{feedback:ki2} that
\begin{equation} \label{ki2:bound}
|k_{i,\bf{l}_i,2}(x_{i0})|=\frac{1}{\delta t}|x_{i0}-x_{l_i,G}|\le \frac{d_{\max}(i)}{2\delta t},\forall x_{i0}\in S_{l_i}^i.
\end{equation}

\noindent Finally, for $k_{i,\bf{l}_i,3}(\cdot)$ we get from \eqref{feedback:ki3:plan} and \eqref{set:W} that
\begin{equation} \label{ki3:bound:plan}
|k_{i,\bf{l}_i,3}(w_i)| = |\lambda(i) w_i|\le \lambda(i) v_{\max}(i),\forall w_i\in W.
\end{equation}

\noindent \textit{STEP 4: Verification of \eqref{controler:consistency}.} In this step we exploit the bounds obtained in Step 2 in order to show \eqref{controler:consistency} for any $d_{j_1},\ldots,d_{j_{N_i}}$ satisfying \eqref{disturbance:bounds}. By taking into account \eqref{ki1:bound}, \eqref{ki2:bound} and \eqref{ki3:bound:plan} we want to prove that
\begin{align}
 L_{1}(i) & \left(\mbf{\mu}(i)\frac{d_{\max}(i)}{2}+\bf{M}(i)t\right)+\frac{d_{\max}(i)}{2\delta t} \nonumber \\
 +L_{2}(i) &\left(\frac{(\delta t-t)d_{\max}(i)}{2\delta t}+\lambda(i)v_{\max}(i)t\right)+\lambda(i) v_{\max}(i)< v_{\max}(i), \forall t\in[0,\delta t].  \label{condition:dmax:vmax:plan}
\end{align}

\noindent Due to the linearity of the left hand side of \eqref{condition:dmax:vmax:plan} with respect to $t$, it suffices to verify it for $t=0$ and $t=\delta t$. For $t=0$ we obtain that
\begin{align*}
& L_{1}(i)\mbf{\mu}(i)\frac{d_{\max}(i)}{2}+\frac{d_{\max}(i)}{2\delta t}+L_{2}(i)\frac{d_{\max}(i)}{2}+\lambda(i) v_{\max}(i)< v_{\max}(i) \iff \\
& L_{1}(i)\mbf{\mu}(i)\delta t d_{\max}(i)+d_{\max}(i)+L_{2}(i)\delta td_{\max}(i)< 2(1-\lambda(i))v_{\max}(i)\delta t,
\end{align*}

\noindent whose validity is guaranteed by \eqref{dmax:interval}. For the case where $t=\delta t$, we have
\begin{align*}
& L_{1}(i)\left(\mbf{\mu}(i)\frac{d_{\max}(i)}{2}+\bf{M}(i)\delta t\right)+\frac{d_{\max}(i)}{2\delta t}+L_{2}(i)\lambda(i) v_{\max}(i)\delta t+\lambda(i) v_{\max}(i)< v_{\max}(i) \iff \\
& d_{\max}(i)(1+L_{1}(i)\mbf{\mu}(i)\delta t)+2(L_{2}(i)\lambda(i) v_{\max}(i)+L_{1}(i)\bf{M}(i))\delta t^2< 2(1-\lambda(i))v_{\max}(i)\delta t,
\end{align*}

\noindent which also holds because of \eqref{dmax:interval}. Hence, we deduce that  \eqref{controler:consistency} is fulfilled. The proof of Part (i) is now complete. 

\noindent \textit{Proof of (ii).} The proof of Part (ii) is a direct consequence of \eqref{planning:condition} and Corollary~\ref{corollary:individual:post}. 
\end{proof}

\section{Conclusions and Future Work}

We have provided an online abstraction framework which guarantees the existence of symbolic models for forward complete multi-agent systems under coupled constraints. The derived abstractions provide for each agent an individual discrete model for an overapproximation of its reachable set over a finite time horizon. In addition, the composition of the individual agent models provides transitions which capture the evolution of the continuous time system over the horizon.  

Ongoing and future work directions include the decentralized computation of the  overapproximations of the agents' reachable sets and the application of the framework to specific network structures. In addition we aim at quantifying the tradeoff between the depth of the planning horizon and the depth of the required information in the network graph for the investigation of the local in time reachability properties of each agent.

\section{Acknowledgements}

This work was supported by the H2020 ERC Starting Grant BUCOPHSYS, the Knut and Alice Wallenberg Foundation, the Swedish Foundation for Strategic Research (SSF), and the Swedish Research Council
(VR).

\bibliographystyle{abbrv}
\bibliography{longtitles,online_references}

\end{document}